\numberwithin{equation}{section}
\newtheorem{theorem}{Theorem}[section]
\newtheorem{lemma}[theorem]{Lemma}
\newtheorem{proposition}[theorem]{Proposition}
\theoremstyle{definition}
\newtheorem{definition}[theorem]{Definition}
\theoremstyle{remark}
\newcommand{\cB}{{\mathcal B}}
\newcommand{\cD}{{\mathcal D}}
\newcommand{\cF}{{\mathcal F}}
\newcommand{\cQ}{{\mathcal Q}}
\newcommand{\R}{{\mathbb R}}
\newcommand{\T}{{\mathbb T}}
\newcommand{\Z}{{\mathbb Z}}
\def\al{\alpha}
\def\dl{\delta}
\def\eps{\varepsilon}
\def\lm{\lambda}
\def\sg{\sigma}
\def\0{\emptyset}
\def\6{\partial}
\def\8{\infty}
\def\l{\left}
\def\r{\right}
\def\ds{\displaystyle}
\begin{document}

\title[Two-weight norm inequalities on Morrey spaces]{Two-weight norm inequalities on Morrey spaces}

\author[H.~Tanaka]{Hitoshi Tanaka}
\address{Graduate School of Mathematical Sciences, The University of Tokyo, Tokyo, 153-8914, Japan}
\email{htanaka@ms.u-tokyo.ac.jp}

\thanks{
The author is supported by 
the FMSP program at Graduate School of Mathematical Sciences, the University of Tokyo, 
and Grant-in-Aid for Scientific Research (C) (No.~23540187), 
the Japan Society for the Promotion of Science. 
}

\subjclass[2010]{42B25,\,42B35.}

\keywords{
Hardy-Littlewood maximal operator;
Hausdorff content;
Morrey space;
Muckenhoupt weight class;
one and two weight norm inequality.
}

\date{}

\begin{abstract}
A description of all the admissible weights 
similar to the Muckenhoupt class $A_p$ 
is an open problem for the weighted Morrey spaces.
In this paper 
necessary condition and sufficient condition 
for two-weight norm inequalities on Morrey spaces to hold are given 
for the Hardy-Littlewood maximal operator. 
Necessary and sufficient condition is also verified for the power weights. 
\end{abstract}

\maketitle

\section{Introduction}\label{sec1}
The purpose of this paper is to develop a theory of weights 
for the Hardy-Littlewood maximal operator 
on the Morrey spaces. 
The Morrey spaces, 
which were introduced by C.~Morrey in order to study regularity questions which appear in the Calculus of Variations, 
describe local regularity more precisely than Lebesgue spaces 
and widely use not only harmonic analysis 
but also partial differential equations 
(cf. \cite{GT}). 

We shall consider all cubes in $\R^n$ which have their sides parallel to the coordinate axes.
We denote by $\cQ$ the family of all such cubes. 
For a cube $Q\in\cQ$ we use 
$l(Q)$ to denote the sides length of $Q$ 
and $|Q|$ to denote the volume of $Q$. 
Let $0<p<\8$ and $0<\lm<n$ 
be two real parameters. 
For $f\in L^p_{{\rm loc}}(\R^n)$, define 
$$
\|f\|_{L^{p,\lm}}
=
\sup_{Q\in\cQ}
\l(\frac1{l(Q)^{\lm}}\int_{Q}|f(x)|^p\,dx\r)^{1/p}.
$$
The Morrey space $L^{p,\lm}(\R^n)$ 
is defined to be the subset of all $L^p$ locally integrable functions $f$ on $\R^n$ 
for which $\|f\|_{L^{p,\lm}}$ is finite. 
It is easy see that 
$\|\cdot\|_{L^{p,\lm}}$
becomes the norm if $p\ge 1$ and 
becomes the quasi norm if $p\in(0,1)$. 
The completeness of Morrey spaces 
follows easily by that of Lebesgue spaces. 
Let $f$ be a locally integrable function on $\R^n$. 
The Hardy-Littlewood maximal operator $M$ is defined by 
$$
Mf(x)
=
\sup_{Q\in\cQ}
\fint_{Q}|f(y)|\,dy1_{Q}(x),
$$
where $\fint_{Q}f(x)\,dx$ stands for 
the usual integral average of $f$ over $Q$ 
and $1_{Q}$ denotes the characteristic function of the cube $Q$. 
By weights we will always mean 
non-negative, 
locally integrable functions 
which are positive on a set of positive measure. 
Given a measurable set $E$ and a weight $w$, 
$w(E)=\int_{E}w(x)\,dx$. 
Given $1<p<\8$, $p'=p/(p-1)$ 
will denote the conjugate exponent number of $p$. 
Let $0<p<\8$ and $w$ be a weight. 
We define the weighted Lebesgue space 
$L^p(\R^n,w)$ 
to be a Banach space equipped with the norm (or quasi norm) 
$$
\|f\|_{L^p(w)}
=
\l(\int_{\R^n}|f(x)|^pw(x)\,dx\r)^{1/p}
<\8.
$$
Let $0<p<\8$, $0<\lm<n$ and $w$ be a weight. 
We define the weighted Morrey space 
$L^{p,\lm}(\R^n,w)$ 
to be a Banach space equipped with the norm (or quasi norm) 
$$
\|f\|_{L^{p,\lm}(w)}
=
\sup_{Q\in\cQ}
\l(\frac1{l(Q)^{\lm}}\int_{Q}|f(x)|^pw(x)\,dx\r)^{1/p}
<\8.
$$

As is well-known, 
for the Hardy-Littlewood maximal operator $M$ 
and $p>1$, 
B.~Muckenhoupt \cite{Mu} showed that 
the weighted inequality 
$$
\|Mf\|_{L^p(w)}
\le C
\|f\|_{L^p(w)}
$$
holds if and only if 
$$
[w]_{A_p}
=
\sup_{Q\in\cQ}
\frac{w(Q)}{|Q|}
\l(\fint_{Q}w(x)^{-p'/p}\,dx\r)^{p/p'}
<\8.
$$
While, for $1<p\le q<\8$, 
E.~Sawyer \cite{Saw} showed that 
the weighted inequality 
$$
\|Mf\|_{L^q(u)}
\le C
\|f\|_{L^p(v)}
$$
holds if and only if 
$$
\l(\int_{Q}M[v^{-p'/p}1_{Q}](x)^qu(x)\,dx\r)^{1/q}
\le C
[v^{-p'/p}](Q)^{1/p}<\8,
$$
holds for every cube $Q\in\cQ$. 

For $p>1$ 
one says that a weight $w$ on $\R^n$ 
belongs to the Muckenhoupt class $A_p$ 
when $[w]_{A_p}<\8$. 
For $p=1$ 
one says that a weight $w$ on $\R^n$ 
belongs to the Muckenhoupt class $A_1$ 
when 
$$
[w]_{A_1}
=
\sup_{x\in\R^n}\frac{Mw(x)}{w(x)}
<\8.
$$
A description of all the admissible weights 
similar to the Muckenhoupt class $A_p$ 
is an open problem for the weighted Morrey space 
$L^{p,\lm}(\R^n,w)$ (see \cite{Sam2}). 
In \cite{ISST}, 
we proved the following partial answer to the problem. 

\begin{proposition}[{\rm\cite[Theorem 2.1]{ISST}}]\label{prp1.1}
Let $1<p<\8$, $0<\lm<n$ and 
$w$ be a weight. Then, 
for every cube $Q\in\cQ$, 
the weighted inequality 
$$
\l(\frac1{l(Q)^{\lm}}\int_{Q}Mf(x)^pw(x)\,dx\r)^{1/p}
\le C
\sup_{\substack{Q'\in\cQ \\ Q'\supset Q}}
\l(\frac1{l(Q')^{\lm}}\int_{Q'}|f(x)|^pw(x)\,dx\r)^{1/p}
$$
holds if and only if 
$$
\sup_{\substack{Q,Q'\in\cQ \\ Q\subset Q'}}
\frac{w(Q)}{l(Q)^{\lm}}
\frac{l(Q')^{\lm}}{|Q'|}
\l(\fint_{Q'}w(x)^{-p'/p}\,dx\r)^{p/p'}
<\8.
$$
\end{proposition}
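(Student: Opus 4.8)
The plan is to derive both implications from the classical Muckenhoupt theory together with a two--cube book-keeping. Throughout write $\sg=w^{-p'/p}$, so that $\fint_{Q'}w^{-p'/p}\,dx=\sg(Q')/|Q'|$ and $\sg^pw=\sg$, and denote by $T(Q,Q')=\frac{w(Q)}{l(Q)^{\lm}}\frac{l(Q')^{\lm}}{|Q'|}\l(\fint_{Q'}\sg\,dx\r)^{p/p'}$ the quantity whose supremum over $Q\subset Q'$ is the constant $K$ in the stated condition; a direct computation gives the convenient form $T(Q,Q')=\frac{w(Q)}{l(Q)^{\lm}}\,l(Q')^{\lm-np}\sg(Q')^{p-1}$. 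Taking $Q=Q'$ shows $[w]_{A_p}\le K$, so whenever $K<\8$ the weight lies in $A_p$ and $M$ is bounded on $L^p(w)$ by Muckenhoupt's theorem recalled above.

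For sufficiency, fix $Q_0\in\cQ$ and $f$, let $A$ denote the $p$-th power of the right-hand side, and split $f=f_1+f_2$ with $f_1=f1_{2Q_0}$. First I would treat the local part by the $L^p(w)$-boundedness of $M$ and the scaling $l(2Q_0)^{\lm}=2^{\lm}l(Q_0)^{\lm}$, which bound $l(Q_0)^{-\lm}\int_{Q_0}(Mf_1)^pw$ by a constant times $A$. For the global part the key geometric fact is that for $x\in Q_0$ only cubes $R\ni x$ with $l(R)\gtrsim l(Q_0)$ meet $\supp f_2$, and each average $\fint_R|f|$ over such a cube is controlled by $\fint_{\wt R}|f|$ for a comparable cube $\wt R\supset Q_0$; hence $Mf_2\le C\sup_{R\supset Q_0}\fint_R|f|$ on $Q_0$. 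Estimating this supremum by Hölder's inequality, splitting $|f|=(|f|w^{1/p})w^{-1/p}$ and inserting $\int_R|f|^pw\le A\,l(R)^{\lm}$, bounds $l(Q_0)^{-\lm}\int_{Q_0}(Mf_2)^pw$ by $C\,A\,T(Q_0,R)\le C\,KA$; summing the local and global parts yields the inequality.

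For necessity I would test with $f=\sg1_{Q'}$. Since $Mf(x)\ge\fint_{Q'}\sg=\sg(Q')/|Q'|$ for $x\in Q\subset Q'$, the left-hand side to the power $p$ is at least $(\sg(Q')/|Q'|)^pw(Q)/l(Q)^{\lm}=T(Q,Q')\cdot\sg(Q')/l(Q')^{\lm}$, while the right-hand side equals $C^p\sup_{R\supset Q}\sg(R\cap Q')/l(R)^{\lm}$. The one real obstacle is that this last supremum can exceed $\sg(Q')/l(Q')^{\lm}$ when $\sg$ concentrates on a sub-cube, so a naive test at an arbitrary $Q'$ is not conclusive. I would circumvent this by testing instead at a cube $Q^{\sharp}\supset Q$ that (nearly) maximizes $\sg(R)/l(R)^{\lm}$ over all $R\supset Q$ with $l(R)\le l(Q')$: for such $Q^{\sharp}$ the supremum on the right is attained at $R=Q^{\sharp}$ (larger $R$ only enlarge the denominator, while for smaller $R\supset Q$ maximality applies), so the test gives $T(Q,Q^{\sharp})\le C^p$. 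Finally I would transfer this to the original pair using the explicit form of $T$: since $Q'$ lies in the family over which $Q^{\sharp}$ is maximal, $\sg(Q')\le(l(Q')/l(Q^{\sharp}))^{\lm}\sg(Q^{\sharp})$, whence $T(Q,Q')\le(l(Q')/l(Q^{\sharp}))^{(\lm-n)p}\,T(Q,Q^{\sharp})\le C^p$, the exponent being negative exactly because $\lm<n$. Taking the supremum over $Q\subset Q'$ gives $K\le C^p<\8$, which is where the hypothesis $\lm<n$ enters decisively.
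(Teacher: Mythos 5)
This proposition is quoted in the paper from \cite[Theorem 2.1]{ISST} and no proof of it appears in the present text, so there is no in-paper argument to compare against; judged on its own merits, your proof is correct and self-contained. The sufficiency half is the standard splitting $f=f_1+f_2$ with $f_1=f1_{2Q_0}$: the local part is legitimate because taking $Q=Q'$ in the condition shows $[w]_{A_p}\le K$, so Muckenhoupt's theorem applies with a constant controlled by $K$, and the doubled cube $2Q_0$ lies in the family over which the right-hand supremum is taken; the global part correctly exploits that every relevant averaging cube $R$ enlarges to a comparable $\wt R\supset Q_0$, and your H\"older step produces exactly $T(Q_0,\wt R)\le K$ (your identity $T(Q,Q')=\frac{w(Q)}{l(Q)^{\lm}}l(Q')^{\lm-np}\sg(Q')^{p-1}$ checks out, as does the exponent computation $\lm-np+\lm(p-1)=(\lm-n)p$ in the transfer step). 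The genuinely nontrivial contribution is your treatment of necessity: you correctly identify that the naive test $f=\sg1_{Q'}$ fails because the right-hand supremum $\sup_{R\supset Q}\sg(R\cap Q')/l(R)^{\lm}$ may exceed $\sg(Q')/l(Q')^{\lm}$, and the remedy of testing at a near-maximizer $Q^{\sharp}$ of $R\mapsto\sg(R)/l(R)^{\lm}$ over $\{R\supset Q:\,l(R)\le l(Q')\}$ works: cubes with $l(R)\le l(Q')$ are dominated by near-maximality, cubes with $l(R)>l(Q')$ by monotonicity of the denominator, and the transfer $T(Q,Q')\le(l(Q')/l(Q^{\sharp}))^{(\lm-n)p}T(Q,Q^{\sharp})$ indeed uses $\lm<n$ in an essential way.

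Two routine technicalities should be patched for completeness. First, your cancellation of $\sg(Q^{\sharp})/l(Q^{\sharp})^{\lm}$ on both sides requires $0<\sg(Q^{\sharp})<\8$; positivity is automatic ($w$ is locally integrable, hence finite a.e., so $\sg>0$ a.e.), but finiteness is not, since a priori $\sg=w^{-p'/p}$ need not be locally integrable. The standard fix is to test with $f=\min(\sg,N)1_{Q^{\sharp}}$, noting that $\min(\sg,N)^pw\le\min(\sg,N)$ pointwise, run your argument uniformly in $N$ (choosing $Q^{\sharp}$ as a near-maximizer for the truncated density), and let $N\to\8$ by monotone convergence. Second, an exact maximizer need not exist, so the $(1+\eps)$-factors from near-maximality should be carried through both the supremum estimate and the transfer inequality; this only changes $C^p$ by a harmless factor, as your parenthetical ``(nearly)'' already anticipates.
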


This proposition says that 
the weighted inequality 
\begin{equation}\label{1.1}
\|Mf\|_{L^{p,\lm}(w)}
\le C
\|f\|_{L^{p,\lm}(w)}
\end{equation}
holds if 
\begin{equation}\label{1.2}
\sup_{Q\in\cQ}
\|w1_{Q}\|_{L^{1,\lm}}
\frac{l(Q)^{\lm}}{|Q|}
\l(\fint_{Q}w(x)^{-p'/p}\,dx\r)^{p/p'}
<\8.
\end{equation}
One sees that the power weights 
$w=|\cdot|^{\al}$ 
belong to 
the Muckenhoupt class $A_p$ 
if and only if 
$-n<\al<(p-1)n$. 
While, the power weights 
$w=|\cdot|^{\al}$ 
satisfy \eqref{1.2} 
if and only if 
$\lm-n\le\al<(p-1)n$. 
Let $H$ be the Hilbert transform defined by 
$$
Hf(x)
=
\lim_{\eps\to +0}
\frac{1}{\pi}\int_{\R}
\frac{1_{(\eps,\8)}(x-y)}{x-y}f(y)\,dy.
$$
For $1<p<\8$ and $0<\lm<1$, 
N.~Samko \cite{Sam1} showed that 
the weighted inequality 
$$
\|Hf\|_{L^{p,\lm}(w)}
\le C
\|f\|_{L^{p,\lm}(w)},
\quad w=|\cdot|^{\al},
$$
holds if and only if 
$\lm-1\le\al<\lm+(p-1)$. Thus, 
our sufficient condition \eqref{1.2} seems to be quite strong. 
In this paper 
we introduce another sufficient condition and necessary condition 
for which \eqref{1.1} to hold 
(Proposition \ref{prp4.1}). 
The conditions justify the power weights 
$w=|\cdot|^{\al}$ 
fulfill \eqref{1.1} 
if and only if 
$\lm-n\le\al<\lm+(p-1)n$ 
(Proposition \ref{prp4.2}). 
More precisely, in this paper 
we introduce sufficient condition and necessary condition 
for which two-weight Morrey norm inequalities to hold 
(Theorem \ref{thm3.1}),
which is closely related to Sawyer's two-weight theorem.
As an appendix, we show two-weight norm inequality 
in the upper triangle case 
$0<q<p<\8$, $1<p<\8$ 
(Proposition \ref{prp5.1}). 

The letter $C$ will be used for constants 
that may change from one occurrence to another. 
Constants with subscripts, such as $C_1$, $C_2$, do not change 
in different occurrences. 
By $A\approx B$ we mean that 
$c^{-1}B\le A\le cB$ with some positive constant $c$ independent of appropriate quantities.

\section{A dual equation}\label{sec2}
In this section we shall verify a dual equation of Morrey spaces (Lemma 2.4). 
For any measurable set $E\subset\R^n$ 
and any $f\in L^p(\R^n)$, we have 
$$
\int_{E}|f(x)|^p\,dx
\le\|f\|_{L^p}^p
<\8.
$$
While, if $f\in L^{p,\lm}(\R^n)$, 
then for any $Q\in\cQ$
$$
\int_{Q}|f(x)|^p\,dx
\le\|f\|_{L^{p,\lm}}^p
l(Q)^{\lm}.
$$
This implies that for any family of counterable cubes 
$\{Q_j\}\subset\cQ$ such that 
$E\subset\bigcup_jQ_j$, we have 
\begin{equation}\label{2.1}
\int_{E}|f(x)|^p\,dx
\le\sum_j\int_{Q_j}|f(x)|^p\,dx
\le\|f\|_{L^{p,\lm}}^p
\sum_jl(Q_j)^{\lm}.
\end{equation}
In general, 
if $E\subset\R^n$ and $0<\al\le n$, 
then the $\al$-dimensional Hausdorff content of $E$ is defined by
$$
H^{\al}(E)
=
\inf\l\{\sum_jl(Q_j)^{\al}\r\},
$$
where the infimum is taken over all coverings of $E$ 
by countable families of cubes $\{Q_j\}\subset\cQ$. 
Thanks to this definition, we get by \eqref{2.1} 
\begin{equation}\label{2.2}
\int_{E}|f(x)|^p\,dx
\le\|f\|_{L^{p,\lm}}^p
H^{\lm}(E).
\end{equation}
The Choquet integral of $\phi\ge 0$ 
with respect to the Hausdorff content $H^{\al}$ is defined by 
$$
\int_{\R^n}\phi\,dH^{\al}
=
\int_0^{\8}
H^{\al}(\{y\in\R^n:\,\phi(y)>t\})
\,dt.
$$
Thus, by \eqref{2.2}, 
for any $\phi\ge 0$ and 
any $f\in L^{p,\lm}(\R^n)$, 
\begin{equation}\label{2.3}
\int_{\R^n}|f(x)|^p\phi(x)\,dx
=
\int_0^{\8}
\int_{\{y\in\R^n:\,\phi(y)>t\}}|f(x)|^p\,dx
\,dt
\le\|f\|_{L^{p,\lm}}^p
\int_{\R^n}\phi\,dH^{\lm}.
\end{equation}

\begin{definition}\label{def2.1}
Let $0<\lm<n$. Define 
the basis $\cB_{\lm}$ 
to be the set of all weights $b$ such that 
$b\in A_1$ and 
$\int_{\R^n}b\,dH^{\lm}\le 1$.
\end{definition}

Following the argument in \cite{AX2}, 
we introduce another characterization of the Morrey space by \eqref{2.3}. 
We need the following lemma. 

\begin{lemma}[{\rm\cite[Lemma 1]{OV}}]\label{lem2.2}
Let $0<\al<n$ and $p>\al/n$. 
Then, 
for some constant $C$ depending only on 
$\al$, $n$ and $p$, 
$$
\int_{\R^n}M[1_{Q}]^p\,dH^{\al}
\le C
l(Q)^{\al}.
$$
\end{lemma}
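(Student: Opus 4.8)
The plan is to normalize by scaling, pass to the distribution function of the Choquet integral, and then estimate the super-level sets of $M[1_Q]$ by single cubes. First I would remove the dependence on the position and size of $Q$. Writing $Q=tQ_0+a$ with $Q_0=[-1/2,1/2]^n$ the unit cube, the dilation invariance of the maximal operator gives $M[1_{tQ_0+a}](x)=M[1_{Q_0}]((x-a)/t)$, while the Hausdorff content obeys $H^{\al}(tE+a)=t^{\al}H^{\al}(E)$. Applying these to each super-level set yields
$$
\int_{\R^n}M[1_Q]^p\,dH^{\al}
=
l(Q)^{\al}\int_{\R^n}M[1_{Q_0}]^p\,dH^{\al}.
$$
Hence it suffices to bound the right-hand integral by a constant depending only on $\al$, $n$, $p$, and the claimed estimate follows with $C\,l(Q)^{\al}$.

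Next, since $0\le M[1_{Q_0}]\le 1$, the definition of the Choquet integral together with the substitution $t=s^p$ gives
$$
\int_{\R^n}M[1_{Q_0}]^p\,dH^{\al}
=
\int_0^1 H^{\al}(\{M[1_{Q_0}]>s\})\,p\,s^{p-1}\,ds,
$$
so the whole problem reduces to estimating the content of the sets $\{M[1_{Q_0}]>s\}$. For this I would establish the pointwise decay $M[1_{Q_0}](x)\le 2^n|x|_\8^{-n}$ whenever $|x|_\8\ge 1$, where $|x|_\8=\max_i|x_i|$: any cube $Q'\in\cQ$ containing $x$ and meeting $Q_0$ must satisfy $l(Q')\ge|x|_\8-1/2\ge|x|_\8/2$, so that $|Q'\cap Q_0|/|Q'|\le l(Q')^{-n}\le 2^n|x|_\8^{-n}$. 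Since also $M[1_{Q_0}]\le 1$ and $s<1$, this forces $\{M[1_{Q_0}]>s\}\subset\{|x|_\8<2s^{-1/n}\}$, a cube of sidelength $4s^{-1/n}$; covering it by itself and using monotonicity of $H^{\al}$ gives $H^{\al}(\{M[1_{Q_0}]>s\})\le 4^{\al}s^{-\al/n}$.

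Inserting this into the displayed integral produces
$$
\int_{\R^n}M[1_{Q_0}]^p\,dH^{\al}
\le
4^{\al}p\int_0^1 s^{p-\al/n-1}\,ds
=
\frac{4^{\al}p}{p-\al/n},
$$
which is finite exactly because the hypothesis $p>\al/n$ makes the exponent $p-\al/n-1$ integrable at the origin. The one place where the argument is genuinely delicate — and where the hypothesis is consumed — is the matching between the decay exponent $n$ of $M[1_{Q_0}]$ (forcing the level set to be a cube of size $s^{-1/n}$) and the scaling exponent $\al/n$ of its $\al$-dimensional content. I expect the pointwise decay estimate and the resulting description of the super-level sets to be the main step to get right, everything else being a routine change of variables and integration.
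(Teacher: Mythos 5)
Your proof is correct: the paper states this lemma without proof, citing \cite[Lemma 1]{OV}, and your argument (reduction to the unit cube by the scaling $H^{\alpha}(tE+a)=t^{\alpha}H^{\alpha}(E)$, the pointwise decay $M[1_{Q_0}](x)\le 2^n|x|_{\infty}^{-n}$, and hence $H^{\alpha}(\{M[1_{Q_0}]>s\})\le 4^{\alpha}s^{-\alpha/n}$ integrated against $ps^{p-1}\,ds$ on $(0,1)$, convergent precisely when $p>\alpha/n$) is essentially the original Orobitg--Verdera argument. All steps check out, including the level-set substitution in the Choquet integral and the single-cube covering bound.
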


Let $0<\lm<\lm_0<n$ and 
$f\in L^{p,\lm}(\R^n)$. 
It follows from \eqref{2.3} and 
Lemma \ref{lem2.2} that, 
for every cube $Q\in\cQ$, 
\begin{align*}
\frac1{l(Q)^{\lm}}\int_{Q}|f(x)|^p\,dx
&=
\frac1{l(Q)^{\lm}}\int_{\R^n}|f(x)|^p1_{Q}(x)\,dx
\\ &\le
\frac1{l(Q)^{\lm}}\int_{\R^n}|f(x)|^pM[1_{Q}](x)^{\lm_0/n}\,dx
\\ &\le 
\|f\|_{L^{p,\lm}}^p
\frac1{l(Q)^{\lm}}
\int_{\R^n}M[1_{Q}]^{\lm_0/n}\,dH^{\lm}
\\ &\le C
\|f\|_{L^{p,\lm}}^p,
\end{align*}
which yields 
\begin{equation}\label{2.4}
\|f\|_{L^{p,\lm}}
\approx
\sup_{b\in\cB_{\lm}}
\l(\int_{\R^n}|f(x)|^pb(x)\,dx\r)^{1/p},
\end{equation}
where we have used the fact that 
$M[1_{Q}]^{\lm_0/n}\in A_1$, 
since $\lm_0/n<1$ 
(cf. \cite[Chapter II]{GR}).

\begin{definition}[{\rm\cite{AX2}}]\label{def2.3}
Let $1<p<\8$ and $0<\lm<n$. 
The space $H^{p,\lm}(\R^n)$ 
is defined by the set of all measurable functions $f$ on $\R^n$ 
with the quasi norm 
$$
\|f\|_{H^{p,\lm}}
=
\inf_{b\in\cB_{\lm}}
\l(\int_{\R^n}|f(x)|^pb(x)^{-p/p'}\,dx\r)^{1/p}
<\8.
$$
\end{definition}

For non-negative functions 
$f\in L^{p,\lm}(\R^n)$ 
and 
$g\in H^{p',\lm}(\R^n)$, 
there holds by H\"{o}lder's inequality that 
\begin{align}\label{2.5}
\int_{\R^n}f(x)g(x)\,dx
&=
\int_{\R^n}f(x)b(x)^{1/p}g(x)b(x)^{-1/p}\,dx
\\ \nonumber &\le
\l(\int_{\R^n}f(x)^pb(x)\,dx\r)^{1/p}
\l(\int_{\R^n}g(x)^{p'}b(x)^{-p'/p}\,dx\r)^{1/p'}
\\ \nonumber &\le C
\|f\|_{L^{p,\lm}}\|g\|_{H^{p',\lm}},
\quad b\in\cB_{\lm}.
\end{align}
In this section we shall verify the following lemma.

\begin{lemma}\label{lem2.4}
Let $1<p<\8$ and $0<\lm<n$. 
Then, 
for any measurable function $g$ on $\R^n$,
we have the estimate (allowing to be infinite) 
$$
\|g\|_{H^{p',\lm}}
\approx
\sup_{f}\int_{R^n}|f(x)g(x)|\,dx,
$$
where the supremum is taken over all functions 
$f\in L^{p,\lm}(\R^n)$ with 
$\|f\|_{L^{p,\lm}}\le 1$.
\end{lemma}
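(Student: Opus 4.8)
\emph{Setup and the easy direction.} The asserted equivalence says that the quasi-norm of $H^{p',\lm}$ is, up to constants, the associate-space norm of $L^{p,\lm}$, so I would prove the two inequalities separately and may assume throughout that $f,g\ge 0$. The bound $\sup_f\int_{\R^n}fg\,dx\le C\|g\|_{H^{p',\lm}}$ is immediate from the material already assembled: for $\|f\|_{L^{p,\lm}}\le 1$ and any $b\in\cB_{\lm}$, the intermediate H\"older estimate in \eqref{2.5} combined with \eqref{2.4} gives $\int_{\R^n}fg\,dx\le C\|f\|_{L^{p,\lm}}(\int_{\R^n}g^{p'}b^{-p'/p}\,dx)^{1/p'}$, and taking the infimum over $b\in\cB_{\lm}$ and then the supremum over admissible $f$ yields the claim. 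The content of the lemma is therefore the reverse inequality $\|g\|_{H^{p',\lm}}\le C\sup_f\int_{\R^n}fg\,dx$.

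\emph{Minimax framing of the reverse inequality.} To prove it I would exhibit both sides as the two orders of a single saddle expression. For $f\not\equiv 0$ and $b\in\cB_{\lm}$ set
$$
\Psi(f,b)=\frac{\int_{\R^n}fg\,dx}{(\int_{\R^n}f^pb\,dx)^{1/p}}.
$$
Since \eqref{2.4} gives $\|f\|_{L^{p,\lm}}\approx(\sup_{b\in\cB_{\lm}}\int_{\R^n}f^pb\,dx)^{1/p}$, one has $\sup_f\frac{\int_{\R^n}fg\,dx}{\|f\|_{L^{p,\lm}}}\approx\sup_f\inf_{b}\Psi(f,b)$, while the sharp form of H\"older's inequality gives $\sup_f\Psi(f,b)=(\int_{\R^n}g^{p'}b^{-p'/p}\,dx)^{1/p'}$ for each fixed $b$, so that $\inf_{b}\sup_f\Psi(f,b)=\|g\|_{H^{p',\lm}}$. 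Hence the lemma amounts to interchanging the supremum and the infimum in $\Psi$, up to the constant already present in \eqref{2.4}.

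\emph{Why the interchange should hold.} The algebraic structure favours it: for fixed $b$ the map $f\mapsto\Psi(f,b)$ is quasi-concave (a linear functional divided by a norm), and for fixed $f$ the map $b\mapsto\Psi(f,b)$ is convex (since $b\mapsto\int_{\R^n}f^pb\,dx$ is linear and $t\mapsto t^{1/p}$ is concave). Moreover $\cB_{\lm}$ is convex up to a constant, because $A_1$ is stable under convex combinations and the Choquet integral $\int\cdot\,dH^{\lm}$ is subadditive up to a dimensional constant (equivalently, up to replacing $H^{\lm}$ by the comparable dyadic Hausdorff content, which is strongly subadditive). Granting this, I would either invoke a minimax theorem of Sion type, or argue concretely through the minimizer: if the infimum defining $\|g\|_{H^{p',\lm}}$ is attained at some $b_0\in\cB_{\lm}$, then the first-order optimality condition, computed along directions $b-b_0$ with $b\in\cB_{\lm}$, reads $\int_{\R^n}f_0^p b\,dx\le\int_{\R^n}f_0^p b_0\,dx$ for all $b\in\cB_{\lm}$, where $f_0=g^{p'/p}b_0^{-p'/p}$. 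Since $\int_{\R^n}f_0^p b_0\,dx=\int_{\R^n}g^{p'}b_0^{-p'/p}\,dx=\|g\|_{H^{p',\lm}}^{p'}$ and $\int_{\R^n}f_0 g\,dx=\|g\|_{H^{p',\lm}}^{p'}$ as well, the optimality bound together with \eqref{2.4} gives $\|f_0\|_{L^{p,\lm}}\le C\|g\|_{H^{p',\lm}}^{p'/p}$; testing the supremum against $f_0$ then yields $\sup_f\int_{\R^n}fg\,dx\ge c\|g\|_{H^{p',\lm}}$, which is the reverse inequality.

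\emph{The main obstacle.} The hard part is exactly the point deferred above: the existence of the minimizer $b_0$, equivalently the compactness and semicontinuity hypotheses that make the minimax interchange legitimate. I would extract these from the defining constraints of $\cB_{\lm}$, using the uniform bound $\int b\,dH^{\lm}\le 1$ together with the $A_1$ structure to pass from a minimizing sequence to a limit that still lies in $\cB_{\lm}$ (or in a fixed-constant $A_1$ enlargement of it, which costs only a multiplicative constant and is harmless for an $\approx$ statement). Finally, the case $\|g\|_{H^{p',\lm}}=\8$ is reduced to the finite case by truncation, applying the argument to $\min(g,N)1_{B(0,N)}$ and letting $N\to\8$.
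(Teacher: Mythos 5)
Your easy direction is fine (it is the paper's \eqref{2.5}), and your first-order optimality computation is actually correct as far as it goes: granting a minimizer $b_0$, the variation along $(1-t)b_0+tb$ does give $\int_{\R^n}f_0^pb\,dx\le\|g\|_{H^{p',\lm}}^{p'}$ for $f_0=g^{p'/p}b_0^{-p'/p}$ (using $p'/p+1=p'$), and testing against $f_0$ closes the argument. But the step you defer --- existence of $b_0$, equivalently the compactness and semicontinuity hypotheses for a Sion-type interchange --- is precisely the analytic content of the lemma, and your proposed fix does not work. Membership in $\cB_{\lm}$ imposes \emph{no uniform bound} on $[b]_{A_1}$ (Definition \ref{def2.1} only asks $b\in A_1$), so a minimizing sequence $(b_k)$ carries no uniform local regularity: the constraint $\int_{\R^n}b_k\,dH^{\lm}\le 1$ only yields $\int_{Q}b_k(x)\,dx\le Cl(Q)^{n-\lm}$, hence weak-$*$ limits that may be singular measures, not weights, let alone $A_1$ weights; and along an a.e.\ convergent subsequence, the stability of the constraint $\int b\,dH^{\lm}\le 1$ requires a Fatou-type property of the Choquet integral against the content $H^{\lm}$, which is not countably additive. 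Your final truncation step ($g_N=\min(g,N)1_{B(0,N)}$, $N\to\8$) silently uses the same thing: the Fatou property of the $H^{p',\lm}$-quasinorm. These are genuinely nontrivial facts --- indeed the Fatou property of the block spaces is the subject of the separate submitted paper \cite{ST} that this paper cites --- so the minimax theorem cannot simply be ``invoked''; its hypotheses are exactly what is missing.

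For comparison, the paper avoids all compactness questions by a constructive route: it proves $H^{p,\lm}(\R^n)=B^{p,\lm}(\R^n)$ with equivalent norms and then inherits the duality from the known block-space result (Lemma \ref{lem2.6}). Concretely, given $f\in H^{p,\lm}$ and a near-optimal $b$, it decomposes $\R^n$ along the level sets $E_k=\{b>2^k\}$, covers each $E_k$ by dyadic cubes nearly realizing $H^{\lm}(E_k)$, disjointifies, and cuts $f$ into $(p,\lm)$-atoms, the $\ell^1$-summability of the coefficients following from H\"older's inequality and $\sum_k2^kH^{\lm}(E_k)\approx\int b\,dH^{\lm}\le 1$; conversely, from an atomic decomposition $f=\sum_jc_ja_j$ it \emph{builds} an admissible test weight $b=\|\{c_j\}\|_{l^1}^{-1}\sum_j|c_j|\,l(Q_j)^{-\lm}M[1_{Q_j}]^{\lm_0/n}$, which lies in $\cB_{\lm}$ with uniform $A_1$ constant. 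Note that this construction is also what a repair of your approach would need: it effectively shows that the subclass of $\cB_{\lm}$ with a fixed $A_1$ constant is norming for \eqref{2.4} and for Definition \ref{def2.3}, which is the uniformity your compactness argument presupposes but cannot extract from the definition of $\cB_{\lm}$ alone.
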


This lemma was first introduced in \cite{AX2} without the proof. 
In \cite{ISY}, 
T.~Izumi et al give the full proof for the block spaces on the unit circle $\T$ with the help of Functional Analysis. 
In \cite{ST}, 
we give the proof for the block spaces on the Euclidean space $\R^n$. 

\begin{definition}\label{def2.5}
Let $1<p<\8$ and $0<\lm<n$. 
The block space $B^{p,\lm}(\R^n)$ 
is defined by the set of all measurable functions $f$ on $\R^n$ 
with the norm
$$
\|f\|_{B^{p,\lm}}
=
\inf\l\{
\|\{c_k\}\|_{l^1}:\,
f=\sum_kc_ka_k
\r\}<\8,
$$
where $a_k$ is a $(p,\lm)$-atom and 
$\|\{c_k\}\|_{l^1}=\sum_k|c_k|<\8$, 
and the infimum is taken over all possible atomic decompositions of $f$. 
Additionally, 
we say that a function $a$ on $\R^n$ 
is a $(p,\lm)$-atom provided that 
$a$ is supported on a cube $Q\in\cQ$ 
and satisfies 
$$
\|a\|_{L^p}
\le
\frac1{l(Q)^{\lm/p'}}.
$$
\end{definition}

\begin{lemma}[{\rm\cite{ST}}]\label{lem2.6}
Let $1<p<\8$ and $0<\lm<n$. 
Then, 
for any measurable function $g$ on $\R^n$,
we have the estimate (allowing to be infinite) 
$$
\|g\|_{B^{p',\lm}}
=
\sup_{f}\int_{R^n}|f(x)g(x)|\,dx,
$$
where the supremum is taken over all functions 
$f\in L^{p,\lm}(\R^n)$ with 
$\|f\|_{L^{p,\lm}}\le 1$.
\end{lemma}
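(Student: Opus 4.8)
The plan is to prove the two inequalities separately; throughout write $N(g)=\sup\{\int_{\R^n}|f(x)g(x)|\,dx:\,\|f\|_{L^{p,\lm}}\le 1\}$ for the right-hand side. First I would dispose of the inequality $N(g)\le\|g\|_{B^{p',\lm}}$, which is immediate from the atomic structure. Fix $f$ with $\|f\|_{L^{p,\lm}}\le 1$ and an atomic decomposition $g=\sum_kc_ka_k$, each $a_k$ a $(p',\lm)$-atom on a cube $Q_k$, so that $\|a_k\|_{L^{p'}}\le l(Q_k)^{-\lm/p}$ by Definition \ref{def2.5}. H\"older's inequality on $Q_k$ gives $\int_{Q_k}|f|\,|a_k|\le\|f1_{Q_k}\|_{L^p}\|a_k\|_{L^{p'}}\le\bigl(l(Q_k)^{\lm/p}\|f\|_{L^{p,\lm}}\bigr)l(Q_k)^{-\lm/p}\le 1$, hence $\int_{\R^n}|fg|\le\sum_k|c_k|$. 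Taking the infimum over decompositions and then the supremum over $f$ yields $N(g)\le\|g\|_{B^{p',\lm}}$.

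The reverse inequality is the hard part, and it rests on the identity that testing against atoms reproduces the Morrey norm: for every $f$,
$$
\sup_{a}\left|\int_{\R^n}a(x)f(x)\,dx\right|
=\|f\|_{L^{p,\lm}},
$$
the supremum running over all $(p',\lm)$-atoms $a$. The bound $\le$ is the H\"older estimate just used; for $\ge$, given a cube $Q$ I would take $a$ proportional to $|f|^{p-1}\sgn(f)1_{Q}$, normalized so that $\|a\|_{L^{p'}}=l(Q)^{-\lm/p}$, for which a direct computation gives $\int_{\R^n}af=\bigl(l(Q)^{-\lm}\int_{Q}|f|^p\bigr)^{1/p}$, and then take the supremum over $Q$. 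In the language of polars this says precisely that the polar of the set of atoms, computed against $f$, is the unit ball of $L^{p,\lm}$. To deduce $\|g\|_{B^{p',\lm}}\le N(g)$ I would then argue by separation: the unit ball of $B^{p',\lm}$ is, essentially by definition, the closed absolutely convex hull of the atoms. Assuming by homogeneity that $N(g)<1<\|g\|_{B^{p',\lm}}$, the function $g$ lies outside this unit ball, so Hahn--Banach produces a continuous linear functional, represented by integration against some $f$, with $\int_{\R^n}gf>1$ while $|\int_{\R^n}af|\le 1$ for every atom $a$. By the displayed identity the latter forces $\|f\|_{L^{p,\lm}}\le 1$, whence $N(g)\ge\int_{\R^n}|gf|>1$, a contradiction. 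Equivalently, the bipolar theorem identifies $\{h:\,N(h)\le 1\}$---the bipolar of the atoms---with the closed absolutely convex hull of the atoms, i.e. the unit ball of $B^{p',\lm}$, which is the asserted equality.

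The hard part will be making this separation rigorous, and this is where the functional analysis enters. One must fix a locally convex topology in which the atoms and $g$ sit in a common space, in which the block-space unit ball is closed, and---most importantly---in which the separating functional is represented by integration against a genuine measurable $f$ rather than an abstract dual element. I would handle this by pairing $\langle h,f\rangle=\int_{\R^n}hf$ against a convenient class of test functions and, if needed, first truncating to a large cube $Q_N$ and to the set $\{|g|\le N\}$, where everything lives in the reflexive space $L^{p'}(Q_N)$ with concrete dual $L^{p}(Q_N)$; carrying out the separation there and extracting a weak limit as $Q_N\uparrow\R^n$ produces the global $f$. The single delicate point is to match the weak closure supplied by the bipolar theorem with the norm (countable-sum) closure built into the definition of $\|\cdot\|_{B^{p',\lm}}$.
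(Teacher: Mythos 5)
Note first that the paper itself contains no proof of Lemma \ref{lem2.6}: it is quoted from \cite{ST} (a paper titled ``The Fatou property of block spaces''), so your attempt has to be measured against what that citation must supply. Your two preliminary steps are correct: the H\"older estimate giving $\sup_f\int_{\R^n}|fg|\le\|g\|_{B^{p',\lm}}$ (with the routine remark that $|g|\le\sum_k|c_k||a_k|$ a.e.\ for any admissible decomposition), and the atom-testing identity $\sup_a|\int_{\R^n}af|=\|f\|_{L^{p,\lm}}$ via $a\propto|f|^{p-1}\sgn(f)1_{Q}$. Moreover, the truncation half of your separation scheme can genuinely be made rigorous: for $h$ supported in a cube $Q_N$ one has $\|h\|_{B^{p',\lm}}\le l(Q_N)^{\lm/p}\|h\|_{L^{p'}}$, so the block unit ball meets $L^{p'}(Q_N)$ in a norm-closed, convex, balanced set; a truncated atom is still an atom (relative to a cube of side $\min(l(Q),l(Q_N))$); and Hahn--Banach in the dual pair $(L^{p'}(Q_N),L^{p}(Q_N))$, combined with your atom-testing identity, yields $\|g1_{Q_N\cap\{|g|\le N\}}\|_{B^{p',\lm}}\le N(g)$ for every $N$, with constant exactly $1$.

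The genuine gap is the passage $N\to\8$, which you flag but misdiagnose as a topological technicality. It is the Fatou property of the block space: that $0\le g_N\uparrow|g|$ a.e.\ with $\sup_N\|g_N\|_{B^{p',\lm}}\le1$ forces $\|g\|_{B^{p',\lm}}\le1$. Equivalently, by the Lorentz--Luxemburg theorem the lemma states that $B^{p',\lm}$ coincides isometrically with its K\"othe bidual, and this holds if and only if the Fatou property does; that is precisely the content (indeed the title) of the cited reference \cite{ST}, and it is nontrivial because the block norm is an infimum over countable decompositions, which does not obviously pass to monotone limits. Your proposed fix---``extracting a weak limit as $Q_N\uparrow\R^n$ produces the global $f$''---does not address this: in the contradiction argument the separating functionals vary with $N$ and no global $f$ is needed; what is needed is to manufacture a single $\ell^1$-summable atomic decomposition of $g$ from decompositions of its truncations, and neither weak compactness nor the bipolar theorem supplies it (the bipolar theorem only identifies $\{h:\,N(h)\le1\}$ with a weakly closed absolutely convex hull of the atoms, which a priori is strictly larger than the set of countable $\ell^1$-combinations of atoms---closing exactly this mismatch is the theorem being cited). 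Within the present paper, the analogous difficulty is what the explicit decomposition in the proof of Lemma \ref{lem2.4} handles for $H^{p,\lm}=B^{p,\lm}$ (selecting dyadic covers of the level sets of a near-optimal $b\in\cB_{\lm}$ and cutting $f$ along them); some such concrete construction, or the Fatou property from \cite{ST}, must be invoked before your argument closes.
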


\begin{proof}[Proof of Proposition \ref{lem2.4}] 
Thanks to Lemma \ref{lem2.6}, 
we need only verify that 
$H^{p,\lm}(\R^n)
=
B^{p,\lm}(\R^n)$
with 
$\|\cdot\|_{H^{p,\lm}}
\approx
\|\cdot\|_{B^{p,\lm}}$. 
This fact was proved in \cite{AX1}. 
But, the direct proof is given here for the completeness. 

We will denote by $\cD$ the family of all dyadic cubes 
$Q=2^{-k}(m+[0,1)^n)$, 
$k\in\Z,\,m\in\Z^n$. 
Assume that for non-negative function 
$f\in H^{p,\lm}(\R^n)$, 
\begin{equation}\label{2.6}
\l(\int_{\R^n}f(x)^pb(x)^{-p/p'}\,dx\r)^{1/p}
\le
2\|f\|_{H^{p,\lm}},
\quad b\in\cB_{\lm}.
\end{equation}
Consider 
$E_k=\{x\in\R^n:\,b(x)>2^k\}$, 
$k\in\Z$. Then, 
\begin{equation}\label{2.7}
\int_{\R^n}b\,dH^{\lm}
\approx
\sum_k2^kH^{\lm}(E_k)
\approx 1.
\end{equation}
By the definition of 
the Hausdorff content $H^{\lm}$ 
and its dyadic equivalence (cf. \cite{OV}), 
one can select a set of the pairwise disjoint dyadic cubes 
$\{Q_{k,j}\}\subset\cD$
such that 
$E_k\subset\bigcup_jQ_{k,j}$ and 
\begin{equation}\label{2.8}
\sum_jl(Q_{k,j})^{\lm}
\le
2H^{\lm}(E_k).
\end{equation}
Upon defining 
$$
\dl_{k,j}
=
Q_{k,j}\setminus\bigcup_iQ_{k+1,i},
$$
we see that the sets $\dl_{k,j}$ are pairwise disjoint and 
$\R^n=\bigcup_{k,j}\dl_{k,j}$. 
With this, we obtain 
$$
f=\sum_{k,j}c_{k,j}a_{k,j},
$$
where
$$
c_{k,j}
=
l(Q_{k,j})^{\lm/p'}
\l(\int_{\dl_{k,j}}f(x)^p\,dx\r)^{1/p}
$$
and
$$
a_{k,j}
=
l(Q_j,k)^{-\lm/p'}
\l(\int_{\dl_{k,j}}f(x)^p\,dx\r)^{-1/p}
f1_{\dl_{k,j}}.
$$
It is easy to check that each $a_{k,j}$ is a $(p,\lm)$-atom. 
To prove that $f\in B^{p,\lm}(\R^n)$, 
it remains to verify that 
$\{c_{k,j}\}$ is summable. 

Notice that 
$b(x)\le 2^{k+1}$ if $x\in\dl_{k,j}$. 
This yields, 
by using H\"{o}lder's inequality, 
\begin{align*}
\|\{c_{k,j}\}\|_{l^1}
&\le C
\sum_{k,j}
l(Q_{k,j})^{\lm/p'}
2^{k/p'}
\l(\int_{\dl_{k,j}}f(x)^pb(x)^{-p/p'}\,dx\r)^{1/p}
\\ &\le C
\l(\sum_{k,j}l(Q_{k,j})^{\lm}2^k\r)^{1/p'}
\l(\int_{\R^n}f(x)^pb(x)^{-p/p'}\,dx\r)^{1/p}
\\ &\le C
\l(\sum_k2^kH^{\lm}(E_k)\r)^{1/p'}
\l(\int_{\R^n}f(x)^pb(x)^{-p/p'}\,dx\r)^{1/p}
\\ &\le C
\|f\|_{H^{p,\lm}},
\end{align*}
where we have used 
\eqref{2.6}--\eqref{2.8}.
This proves 
$H^{p,\lm}(\R^n)
\subset
B^{p,\lm}(\R^n)$
with 
$\|\cdot\|_{B^{p,\lm}}
\le C
\|\cdot\|_{H^{p,\lm}}$.

We now prove converse. 
Suppose that $f\in B^{p,\lm}(\R^n)$. 
So, $f=\sum_jc_ja_j$ with 
$\{c_j\}\in l^1$ and 
each $a_j$ is a $(p,\lm)$-atom. 
Assume that $Q_j$ is the support cube of $a_j$. 
For $0<\lm<\lm_0<n$, define 
$$
b(x)
=
\|\{c_j\}\|_{l^1}^{-1}
\sum_j|c_j|b_j(x)
$$
with
$$
b_j(x)
=
\frac1{l(Q_j)^{\lm}}
M[1_{Q_j}](x)^{\lm_0/n}.
$$
Then, we see that 
$$
\int_{\R^n}b_j\,dH^{\lm}\le C
\text{ and }
[b_j]_{A_1}\le C.
$$
This means that 
$$
\int_{\R^n}b\,dH^{\lm}\le C
\text{ and }
[b]_{A_1}\le C.
$$
Thus, we have $Cb\in\cB_{\lm}$ and 
$$
\l(\int_{\R^n}|f(x)|^pb(x)^{-p/p'}\,dx\r)^{1/p}
\le
\sum_j|c_j|
\l(\int_{Q_j}|a_j(x)|^pb(x)^{-p/p'}\,dx\r)^{1/p}.
$$
Notice that whenever $x\in Q_j$ 
$$
b(x)^{-p/p'}
\le b_j(x)^{-p/p'}
\le l(Q_j)^{\lm{p/p'}},
$$
which implies
$$
\l(\int_{\R^n}|f(x)|^pb(x)^{-p/p'}\,dx\r)^{1/p}
\le
\sum_j|c_j|
l(Q_j)^{\lm/p'}
\l(\int_{Q_j}|a_j(x)|^p\,dx\r)^{1/p}
\le
\sum_j|c_j|.
$$
This proves 
$B^{p,\lm}(\R^n)
\subset
H^{p,\lm}(\R^n)$
with 
$\|\cdot\|_{H^{p,\lm}}
\le C
\|\cdot\|_{B^{p,\lm}}$. 
These complete the proof of Lemma \ref{2.4}.
\end{proof}

\section{Two-weight norm inequalities}\label{sec3}
In this section we shall prove the following theorem. 

\begin{theorem}\label{thm3.1}
Let $1<p<\8$, $0<q<\8$, 
$0<\lm<n$, $u,v$ be weights. 
Consider the following five statements: 

\begin{enumerate}
\item[{\rm(a)}] 
There exists a constant $c_1>0$ 
such that 
$$
\|Mf\|_{L^{q,\lm}(u)}
\le c_1
\|f\|_{L^{p,\lm}(v)}
$$
holds for every function 
$f\in L^{p,\lm}(\R^n,v)$;
\item[{\rm(b)}] 
There exists a constant $c_2>0$ 
such that 
$$
\frac1{|Q|}
\|u^{1/q}1_{Q}\|_{L^{q,\lm}}
\|v^{-1/p}1_{Q}\|_{H^{p',\lm}}
\le c_2
$$
holds for every cube $Q\in\cQ$;
\item[{\rm(c)}] 
There exists a constant $c_3>0$ 
such that 
$$
\inf_{b\in\cB_{\lm}}
\l(
\frac1{l(Q_0)^{\lm/q}}
\sup_{\substack{Q\in\cQ \\ Q\subset Q_0}}
\frac1{[(bv)^{-p'/p}](Q)^{1/p}}
\l(\int_{Q}M[(bv)^{-p'/p}1_{Q}](x)^qu(x)\,dx\r)^{1/q}
\r)
\le c_3
$$
holds for every cube $Q_0\in\cQ$;
\item[{\rm(d)}] 
There exists a constant $c_4>0$ 
such that, for some $a>1$, 
$$
\inf_{b\in\cB_{\lm}}
\l(
\frac1{l(Q_0)^{\lm/q}}
\sup_{\substack{Q\in\cQ \\ Q\subset Q_0}}
\frac{u(Q)^{1/q}}{|Q|^{1/p}}
\l(\fint_{Q}[b(x)v(x)]^{-ap'/p}\,dx\r)^{1/ap'}
\r)
\le c_4
$$
holds for every cube $Q_0\in\cQ$;
\item[{\rm(e)}] 
There exists a constant $c_5>0$ 
such that 
$$
\inf_{b\in\cB_{\lm}}
\l(
\frac1{l(Q_0)^{\lm/q}}
\l(\int_{Q_0}
M[(bv)^{-p'/p}](x)^r
u(x)^{r/q}[b(x)v(x)]^{rp'/p^2}
\,dx\r)^{1/r}
\r)
\le c_5,
\quad\frac1q=\frac1r+\frac1p,
$$
holds for every cube $Q_0\in\cQ$.
\end{enumerate}

\noindent
Then,

\begin{enumerate}
\item[{\rm(I)}] 
{\rm(a)} implies {\rm(b)} 
with $c_2\le Cc_1$;
\item[{\rm(II)}] 
When $1<p\le q<8$, 
{\rm(b)} and {\rm(c)} imply {\rm(a)} 
with $c_1\le C(c_2+c_3)$;
\item[{\rm(III)}] 
When $1<p\le q<8$, 
{\rm(b)} and {\rm(d)} imply {\rm(a)} 
with $c_1\le C(c_2+c_4)$;
\item[{\rm(IV)}] 
When $0<q<p<8$ and $1<p<\8$, 
{\rm(b)} and {\rm(e)} imply {\rm(a)} 
with $c_1\le C(c_2+c_5)$.
\end{enumerate}
\end{theorem}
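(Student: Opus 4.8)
My plan is to run all four implications through one engine: the equivalence \eqref{2.4}, which lets me trade the Morrey norm $\|f\|_{L^{p,\lm}(v)}$ for the family $\sup_{b\in\cB_{\lm}}(\int|f|^pvb)^{1/p}$, together with the duality of Lemma \ref{lem2.4}. I would prove (I) first, then isolate a ``tail lemma'' common to (II)--(IV), and finally dispatch the three sufficiency statements by invoking (localized) classical two-weight maximal theorems for the pair $(u,b_{Q_0}v)$.

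For (I), I test (a) against the functions that are dual to $\cB_{\lm}$. Fix $Q\in\cQ$ and any $f\in L^{p,\lm}(\R^n)$ with $\|f\|_{L^{p,\lm}}\le1$, and put $g=fv^{-1/p}1_{Q}$. Since $|g|^pv=|f|^p1_{Q}$, one checks $\|g\|_{L^{p,\lm}(v)}\le1$. As $g$ is supported in $Q$, for every cube $Q'$ and every $x\in Q'\cap Q$ one has $Mg(x)\ge\fint_{Q}g$, whence $\|Mg\|_{L^{q,\lm}(u)}\ge(\fint_{Q}g)\,\|u^{1/q}1_{Q}\|_{L^{q,\lm}}$. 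Inserting this lower bound into (a), and then taking the supremum over admissible $f$, Lemma \ref{lem2.4} turns $\sup_f\int_{Q}fv^{-1/p}$ into $\|v^{-1/p}1_{Q}\|_{H^{p',\lm}}$; the factor $|Q|^{-1}$ from $\fint_{Q}$ yields exactly (b) with $c_2\le Cc_1$.

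For (II)--(IV) I fix $Q_0$ and split, for $x\in Q_0$, $Mf(x)\le M[f1_{3Q_0}](x)+T(x)$, where $T$ gathers the averages over cubes of side larger than $l(Q_0)$. The tail is dominated by the constant $\wt T=\sup_k|3^kQ_0|^{-1}\int_{3^kQ_0}|f|$, so that $l(Q_0)^{-\lm/q}(\int_{Q_0}T^qu)^{1/q}\le C\,\wt T\,u(Q_0)^{1/q}/l(Q_0)^{\lm/q}$. Estimating each average by the Morrey duality \eqref{2.5}, invoking (b) on $3^kQ_0$, and using the elementary bound $\|u^{1/q}1_{3^kQ_0}\|_{L^{q,\lm}}\ge(u(Q_0)/l(Q_0)^{\lm})^{1/q}$ collapses the resulting ratio to $1$, giving a tail estimate $\le Cc_2\|f\|_{L^{p,\lm}(v)}$ that is uniform in the ordering of $p,q$. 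For the local part I choose $b=b_{Q_0}\in\cB_{\lm}$ nearly attaining the infimum in (c) (resp. (d), (e)) and read the hypothesis as a two-weight condition for the ordinary pair $(u,b_{Q_0}v)$ restricted to subcubes of $Q_0$. In (II) this is precisely Sawyer's diagonal testing condition, so a localized Sawyer theorem bounds $(\int_{Q_0}M[f1_{3Q_0}]^qu)^{1/q}$ by $Cc_3\,l(Q_0)^{\lm/q}(\int|f|^pb_{Q_0}v)^{1/p}$; dividing by $l(Q_0)^{\lm/q}$ and passing to $\|f\|_{L^{p,\lm}(v)}$ through the easy half of \eqref{2.4} (that is, \eqref{2.3}) closes (a). For (III) I would simply show that the power bump $a>1$ in (d) upgrades the $A_{p,q}$-type quantity to the Sawyer testing quantity in (c) with the same $b$, via the Fefferman--Stein/P\'erez bump argument (the exponents match: both sides are homogeneous of degree $1/p'$ in $(bv)^{-p'/p}$), and then invoke (II). Case (IV) is identical in structure, except that in the off-diagonal range $0<q<p$ the governing criterion is the single $L^r$ condition of Sawyer's $q<p$ theorem, which is exactly what (e) encodes once one writes $\sigma=(b_{Q_0}v)^{-p'/p}$ and notes $(b_{Q_0}v)^{rp'/p^2}=\sigma^{-r/p}$, so the integrand is $M[\sigma]^r\sigma^{-r/p}u^{r/q}$.

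The tail estimate and the reduction through \eqref{2.4} are routine. The real obstacle is the localization of the two-weight maximal theorems: the hypotheses (c)--(e) only test over cubes $Q\subset Q_0$, so I must re-examine the Calder\'on--Zygmund proof of Sawyer's theorem and of its $q<p$ counterpart to confirm that testing over subcubes of a fixed $Q_0$ already controls $M[f1_{3Q_0}]$ on $Q_0$, absorbing the mismatch between $Q_0$ and $3Q_0$ by a finite subdivision into translates of $Q_0$. Securing these localized estimates with constants independent of $Q_0$ is where the substantive work lies.
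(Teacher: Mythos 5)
Your architecture is the paper's: part (I) is proved exactly as you describe (test against $f$ supported in $Q$, take the supremum, and convert via Lemma \ref{lem2.4}), and the tail/local splitting with (b) absorbing the tail through \eqref{2.5} is also the paper's route. One remark that simplifies your plan considerably: the paper first transfers everything to (shifted) dyadic grids, so that any dyadic cube either is contained in $Q_0$ or contains it; then $Mf\le C_{\8}+M[f1_{Q_0}]$ on $Q_0$ with $C_{\8}=\sup_{Q\supset\neq Q_0}\fint_{Q}f$, and the local part involves only $M[f1_{Q_0}]$, whose relevant cubes are exactly the $Q\subset Q_0$ appearing in (c)--(e). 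Your $3Q_0$ mismatch, and the ``finite subdivision into translates'' you invoke to absorb it, simply never arise; the localization issue you flag as the substantive obstacle dissolves at this step.

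The genuine gap is what remains after that: you never prove the localized two-weight estimates, and they are the entire content of the paper's Section 3, not a re-examination of known proofs. For (II) the paper argues directly: linearize the dyadic maximal operator via the pairwise disjoint sets $E(Q)$ on which $M[f1_{Q_0}]=\fint_{Q}f$, dualize against $g\in L^{q'}(u)$, run a principal-cubes (corona) decomposition with stopping condition $\fint_{Q}f\sg^{-1}\,d\sg>2\fint_{F}f\sg^{-1}\,d\sg$ where $\sg=(bv)^{-p'/p}$ for the fixed near-optimal $b$, use $\|\cdot\|_{l^q}\le\|\cdot\|_{l^p}$ (this is where $p\le q$ enters), the testing bound \eqref{3.4}, the sparseness $\sg(E_{\cF}(F))\ge\sg(F)/2$, Lemma \ref{lem3.2} for $M_{\sg}$ on $L^p(\sg)$, and finally \eqref{2.3} to reach $\|f\|_{L^{p,\lm}(v)}$. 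For (III) the paper does \emph{not} reduce (d) to (c) by a bump argument: it reruns the same corona with $\sg\equiv1$, applies H\"{o}lder with $c<1$, $(cp)'=ap'$, and uses the $L^{1/c}$-boundedness of $M$; your homogeneity check is a sanity test, not a proof, and a localized ``bump implies testing'' with the same fixed $b$ and the $l(Q_0)^{\lm/q}$ normalization would have to be proved from scratch, which is no shorter than the direct argument. For (IV) no ``$q<p$ theorem'' is needed or invoked (note (e) is only claimed sufficient, not a characterization): the pointwise bound $Mf(x)\le M[\sg](x)M_{\sg}[f\sg^{-1}](x)$ on $Q_0$, H\"{o}lder with $1/q=1/r+1/p$, condition (e) for the factor $\bigl(\int_{Q_0}M[\sg]^ru^{r/q}\sg^{-r/p}\,dx\bigr)^{1/r}$, and Lemma \ref{lem3.2} finish in a few lines. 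As written, your proposal supplies the skeleton but outsources precisely the arguments that constitute the theorem.
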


We shall prove this theorem in the remainder of this section. 
Recall that $\cD$ denotes the family of all dyadic cubes 
$Q=2^{-k}(m+[0,1)^n)$, 
$k\in\Z,\,m\in\Z^n$. 
In the following proof, 
by the argument which uses appropriate averages of the sifted dyadic cubes,
we can replace the set of cubes $\cQ$ by the set of dyadic cubes $\cD$ 
(cf. \cite{Hy1}). So, 
the Hardy-Littlewood maximal operator $M$ 
can be replaced by 
the dyadic Hardy-Littlewood maximal operator $M_d$. 
But, for the sake of simplicity, 
we will denote $M_d$ by the same $M$. 

\subsection{Proof of Theorem \ref{thm3.1} (I)}\label{ssec3.1}
Assume that the statement (a). Then, 
$$
\|Mf\|_{L^{q,\lm}(u)}
\le c_1
\|f\|_{L^{p,\lm}(v)}
$$
holds for every function 
$f\in L^{p,\lm}(\R^n,v)$.
For any cube $Q\in\cD$ and 
any function $f\in L^{p,\lm}(\R^n,v)$, 
$$
\fint_{Q}|f(x)|\,dx
\|u^{1/q}1_{Q}\|_{L^{q,\lm}}
=
\l\|\fint_{Q}|f(x)|\,dxu^{1/q}1_{Q}\r\|_{L^{q,\lm}}
\le
\|M[f1_{Q}]\|_{L^{q,\lm}(u)}
\le c_1
\|f1_{Q}\|_{L^{p,\lm}(v)}.
$$
Taking the supremum over all functions $f$ 
with 
$\|f1_{Q}\|_{L^{p,\lm}(v)}\le 1$, 
we have by Lemma \ref{lem2.4} 
$$
\frac1{|Q|}
\|u^{1/q}1_{Q}\|_{L^{q,\lm}}
\|v^{-1/p}1_{Q}\|_{H^{p',\lm}}
\le Cc_1,
$$
which is the statement (b).

\subsection{Proof of Theorem \ref{thm3.1} (II)}\label{ssec3.2}
We need more a lemma. 
Let $\mu$ be a positive measure on $\R^n$ and 
$f$ be a locally $\mu$-integrable function on $\R^n$. 
The dyadic Hardy-Littlewood maximal operator $M_{\mu}$ is defined by 
$$
M_{\mu}f(x)
=
\sup_{Q\in\cD}
\fint_{Q}|f(y)|\,d\mu(y)1_{Q}(x).
$$

\begin{lemma}[{\rm\cite{Hy1}}]\label{lem3.2}
We have the estimate 
$$
\|M_{\mu}f\|_{L^p(\mu)}
\le p'
\|f\|_{L^p(\mu)},
\quad p\in(1,\8].
$$
\end{lemma}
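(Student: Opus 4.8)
The plan is to obtain the sharp constant $p'$ by combining the refined weak-type $(1,1)$ bound (with constant one) with the standard layer-cake self-improvement argument. First I would dispose of the endpoint $p=\8$: since $\fint_{Q}|f|\,d\mu\le\|f\|_{L^{\8}(\mu)}$ for every $Q\in\cD$, we have $M_{\mu}f\le\|f\|_{L^{\8}(\mu)}$ $\mu$-a.e., which is exactly the claim with $p'=1$. For $1<p<\8$ I would reduce, by replacing $f$ with $\min(|f|,N)1_{Q_0}$ and letting $N\to\8$ and $Q_0\uparrow\R^n$ (monotone convergence), to the case where $f\ge 0$ is bounded with support in a cube. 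This reduction is not cosmetic: it secures the \emph{a priori} finiteness $\|M_{\mu}f\|_{L^{p}(\mu)}<\8$, which is what licenses the division at the very end.

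The first key step is the distributional inequality
$$
t\,\mu(\{M_{\mu}f>t\})
\le
\int_{\{M_{\mu}f>t\}}|f|\,d\mu,
\qquad t>0.
$$
To prove it, I would decompose the level set $\{M_{\mu}f>t\}$ into the pairwise disjoint family of \emph{maximal} dyadic cubes $\{Q_j\}\subset\cD$ for which $\fint_{Q_j}|f|\,d\mu>t$; the truncation reduction guarantees that such maximal cubes exist (averages over sufficiently large cubes are small). Then $\{M_{\mu}f>t\}=\bigcup_jQ_j$ and $t\,\mu(Q_j)<\int_{Q_j}|f|\,d\mu$ for each $j$, so summing over $j$ and invoking disjointness yields the inequality.

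The second key step is the layer-cake identity together with Fubini and H\"{o}lder. Writing
$$
\int_{\R^n}(M_{\mu}f)^p\,d\mu
=
p\int_0^{\8}t^{p-1}\mu(\{M_{\mu}f>t\})\,dt,
$$
inserting the distributional bound, interchanging the order of integration, and carrying out the inner $t$-integral produces precisely the factor $p/(p-1)=p'$:
$$
\int_{\R^n}(M_{\mu}f)^p\,d\mu
\le
p'\int_{\R^n}|f|\,(M_{\mu}f)^{p-1}\,d\mu.
$$
H\"{o}lder's inequality with exponents $p$ and $p'$ then bounds the right-hand side by $p'\|f\|_{L^p(\mu)}\|M_{\mu}f\|_{L^p(\mu)}^{p-1}$, using $(p-1)p'=p$. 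Since $\|M_{\mu}f\|_{L^p(\mu)}$ is finite (and may be assumed nonzero, else there is nothing to prove), dividing through by $\|M_{\mu}f\|_{L^p(\mu)}^{p-1}$ gives $\|M_{\mu}f\|_{L^p(\mu)}\le p'\|f\|_{L^p(\mu)}$, and removing the truncation by monotone convergence finishes the argument.

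The main obstacle is not any single estimate---each is routine---but the bookkeeping needed to run the maximal-cube decomposition and the final division rigorously for an \emph{arbitrary} positive measure $\mu$: one must ensure the maximal cubes exist (which can fail if arbitrarily large averages occur, hence the truncation) and that averages are taken only over cubes with $\mu(Q)>0$. Once the finiteness $\|M_{\mu}f\|_{L^p(\mu)}<\8$ is in hand, the self-improvement mechanism delivers the sharp constant $p'$ automatically.
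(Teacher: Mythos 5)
The paper itself gives no proof of this lemma---it is quoted from \cite{Hy1}---and your scheme (refined weak-type $(1,1)$ with constant one on the level set, layer cake, Fubini, H\"{o}lder with $(p-1)p'=p$, then division licensed by a priori finiteness) is exactly the standard argument behind the citation; the endpoint $p=\infty$, the Fubini computation producing the factor $p/(p-1)$, and the finiteness reduction are all correct. However, the one step you yourself single out as the main obstacle is resolved incorrectly: truncating $f$ to be bounded with support in a cube does \emph{not} guarantee that maximal cubes exist, because for a general positive measure $\mu$ the denominators $\mu(Q)$ need not grow along ascending chains. Concretely, take $n=1$, a \emph{finite} measure $\mu$ with $\mu([0,1))>0$, and $f=1_{[0,1)}$, which is already bounded with compact support. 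For the ancestors $Q^{(k)}=[0,2^k)\in\cD$ one has
$$
\fint_{Q^{(k)}}f\,d\mu
=
\frac{\mu([0,1))}{\mu([0,2^k))}
\;\downarrow\;
A:=\frac{\mu([0,1))}{\mu([0,\infty))}>0,
$$
so for $0<t<A$ \emph{every} dyadic cube containing $[0,1)$ has average exceeding $t$: there is no maximal cube, and the level set $\{M_{\mu}f>t\}$ is the whole quadrant $[0,\infty)$ rather than a disjoint union of maximal cubes. Your parenthetical justification (``averages over sufficiently large cubes are small'') is simply false here, since it conflates truncating $f$ with growth of $\mu(Q)$.

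The distributional inequality is nevertheless true, and two routine patches restore your argument. Either (i) first prove it for the truncated operator $M_{\mu}^{(N)}$, the supremum restricted to dyadic cubes of side length at most $2^N$, where maximal cubes exist trivially because every ascending chain terminates; this gives $t\,\mu(\{M_{\mu}^{(N)}f>t\})\le\int_{\{M_{\mu}f>t\}}|f|\,d\mu$ uniformly in $N$, and letting $N\to\infty$ with $M_{\mu}^{(N)}f\uparrow M_{\mu}f$ and continuity from below of $\mu$ concludes. Or (ii) keep your decomposition where maximal cubes exist and treat each infinite ascending chain separately: its union $O$ is a dyadic quadrant, $t\,\mu(Q^{(k)})<\int_{Q^{(k)}}|f|\,d\mu\le\int_{O}|f|\,d\mu$ for all $k$, so continuity from below yields $t\,\mu(O)\le\int_{O}|f|\,d\mu$, and the (at most $2^n$) quadrants together with the maximal cubes tile $\{M_{\mu}f>t\}$ disjointly, since any cube of the family contained in such an $O$ lies in some $Q^{(k)}$ and hence is not maximal. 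With either repair, the remainder of your proof goes through verbatim and delivers the sharp constant $p'$ for all $p\in(1,\infty]$.
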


Assume that $1<p\le q<\8$ and 
the statements (b) and (c). 
Without loss of generality 
we may assume that $f$ is non-negative. 
Recall that $M$ is now the dyadic Hardy-Littlewood maximal operator. 
Fix a cube $Q_0$ in $\cD$. 
Then, by a standard argument we have 
$$
Mf(x)
\le 
C_{\8}+M[f1_{Q_0}](x),
\quad x\in Q_0,
$$
with
$$
C_{\8}
=
\sup_{\substack{Q\in\cD \\ Q\supset\neq Q_0}}
\fint_{Q}f(y)\,dy.
$$
By the definition of the weighted Morrey norm, 
we have to evaluate two quantities: 
\begin{equation}\label{3.1}
C_{\8}
\l(\frac1{l(Q_0)^{\lm}}\int_{Q_0}u(x)\,dx\r)^{1/q};
\end{equation}
\begin{equation}\label{3.2}
\l(\frac1{l(Q_0)^{\lm}}\int_{Q_0}M[f1_{Q_0}](x)^qu(x)\,dx\r)^{1/q}.
\end{equation}

\noindent{\bf The estimate of \eqref{3.1}}\ \ 
There holds 
$$
\l(\frac1{l(Q_0)^{\lm}}\int_{Q_0}u(x)\,dx\r)^{1/q}
\le
\|u^{1/q}1_{Q_0}\|_{L^{q,\lm}},
$$
and, for $Q\in\cD$ 
such that $Q\supset\neq Q_0$, 
$$
\fint_{Q}f(y)\,dy
\le
\frac1{|Q|}
\|v^{-1/p}1_{Q}\|_{H^{p',\lm}}
\|f1_{Q}\|_{L^{p,\lm}(v)},
$$
where we have used \eqref{2.5}.
These yield by use of the statement (b) 
$$
{\rm\eqref{3.1}}
\le c_2
\|f\|_{L^{p,\lm}(v)}.
$$

\noindent{\bf The estimate of \eqref{3.2}}\ \ 
Let 
$\cD(Q_0)=\{Q\in\cD:\,Q\subset Q_0\}$. 
Consider, for all $Q\in\cD(Q_0)$, 
$$
E(Q)
=
\l\{x\in Q:\,
M[f1_{Q_0}](x)=\fint_{Q}f(y)\,dy
\r\}
\setminus
\bigcup_{\substack{Q'\in\cD(Q_0) \\ Q'\supset\neq Q}}
\l\{x\in Q':\,
M[f1_{Q_0}](x)=\fint_{Q'}f(y)\,dy
\r\}.
$$
A little thought confirms that 
the sets $E(Q)$ are pairwise disjoint and 
$$
M[f1_{Q_0}](x)
=
\sum_{Q\in\cD(Q_0)}
\fint_{Q}f(y)\,dy1_{E(Q)}(x),
\quad x\in Q_0.
$$
Take a function $g$ which is non-negative, 
is supported on $Q_0$ and satisfies 
$\|g\|_{L^{q'}(u)}\le 1$.
Upon using the duality argument, 
we shall estimate 
\begin{equation}\label{3.3}
\sum_{Q\in\cD(Q_0)}
\fint_{Q}f(y)\,dy
\int_{E(Q)}g(x)u(x)\,dx.
\end{equation}
Fix $b\in\cB_{\lm}$ so that 
\begin{equation}\label{3.4}
\frac1{l(Q_0)^{\lm/q}}
\sup_{\substack{Q\in\cQ \\ Q\subset Q_0}}
\frac1{[(bv)^{-p'/p}](Q)^{1/p}}
\l(\int_{Q}M[(bv)^{-p'/p}1_{Q}](x)^qu(x)\,dx\r)^{1/q}
\le 2c_3,
\end{equation}
and, let $\sg=(bv)^{-p'/p}$. 
Then, \eqref{3.3} can be rewritten as 
$$
\sum_{Q\in\cD(Q_0)}
\frac{\sg(Q)}{|Q|}
\fint_{Q}f(y)\sg(y)^{-1}\,d\sg(y)
\int_{E(Q)}g(x)u(x)\,dx,
$$
where $d\sg(y)$ denotes $\sg(y)\,dy$.
We now employ the argument of the principal cubes 
(cf. \cite{Hy2,Ta}).

We define the collection of principal cubes 
$$
\cF=\bigcup_{k=0}^{\8}\cF_k,
$$
where $\cF_0=\{Q_0\}$, 
$$
\cF_{k+1}
=
\bigcup_{F\in\cF_k}ch_{\cF}(F)
$$
and $ch_{\cF}(F)$ is defined by 
the set of all maximal dyadic cubes $Q\subset F$ 
such that 
$$
\fint_{Q}f(y)\sg(y)^{-1}\,d\sg(y)
>
2\int_{F}f(y)\sg(y)^{-1}\,d\sg(y).
$$
Observe that 
$$
\sum_{F'\in ch_{\cF}(F)}\sg(F')
\le
\l(2\fint_{F}f(y)\sg(y)^{-1}\,d\sg(y)\r)^{-1}
\sum_{F'\in ch_{\cF}(F)}
\int_{F'}f(y)\sg(y)^{-1}\,d\sg(y)
\le
\frac{\sg(F)}{2},
$$
and, hence, 
\begin{equation}\label{3.5}
\sg(E_{\cF}(F))
=
\sg\l(F\setminus\bigcup_{F'\in ch_{\cF}(F)}F'\r)
\ge
\frac{\sg(F)}{2},
\end{equation}
where the sets $E_{\cF}(F)$ are pairwise disjoint. 
We further define the stopping parents 
$$
\pi_{\cF}(Q)
=
\min\{F\supset Q:\,F\in\cF\}.
$$

It follows that 
\begin{align*}
{\rm\eqref{3.3}}
&=
\sum_{F\in\cF}
\sum_{\substack{Q: \\ \pi(Q)=F}}
\frac{\sg(Q)}{|Q|}
\fint_{Q}f(y)\sg(y)^{-1}\,d\sg(y)
\int_{E(Q)}g(x)u(x)\,dx
\\ &\le 2
\sum_{F\in\cF}
\fint_{F}f(y)\sg(y)^{-1}\,d\sg(y)
\sum_{\substack{Q: \\ \pi(Q)=F}}
\frac{\sg(Q)}{|Q|}
\int_{E(Q)}g(x)u(x)\,dx.
\end{align*}
{}From H\"{o}lder's inequality, 
\begin{align*}
\lefteqn{
\sum_{\substack{Q: \\ \pi(Q)=F}}
\frac{\sg(Q)}{|Q|}
\int_{E(Q)}g(x)u(x)\,dx
}\\ &\le
\l(
\sum_{\substack{Q: \\ \pi(Q)=F}}
\l(\frac{\sg(Q)}{|Q|}\r)^q
\int_{E(Q)}u(x)\,dx
\r)^{1/q}
\l(
\sum_{\substack{Q: \\ \pi(Q)=F}}
\int_{E(Q)}g(x)^{q'}u(x)\,dx
\r)^{1/q'}.
\end{align*}
{}From the definition	of $M$, 
the facts that 
$E(Q)\subset Q$ and 
the sets $E(Q)$ are pairwise disjoint, 
$$
\le
\l(
\int_{F}M[\sg1_{F}](x)u(x)\,dx
\r)^{1/q}
\l(
\sum_{\substack{Q: \\ \pi(Q)=F}}
\int_{E(Q)}g(x)^{q'}u(x)\,dx
\r)^{1/q'}.
$$
{}From H\"{o}lder's inequality again, 
\eqref{3.3} can be majorized by 
\begin{align*}
2&\sum_{F\in\cF}
\fint_{F}f(y)\sg(y)^{-1}\,d\sg(y)
\l(\int_{F}M[\sg1_{F}](x)u(x)\,dx\r)^{1/q}
\l(
\sum_{\substack{Q: \\ \pi(Q)=F}}
\int_{E(Q)}g(x)^{q'}u(x)\,dx
\r)^{1/q'}
\\ &\le 2
\l\{
\sum_{F\in\cF}
\l(
\fint_{F}f(y)\sg(y)^{-1}\,d\sg(y)
\l(\int_{F}M[\sg1_{F}](x)^qu(x)\,dx\r)^{1/q}
\r)^q
\r\}^{1/q}
\\ &\quad\times
\l\{
\sum_{F\in\cF}
\sum_{\substack{Q: \\ \pi(Q)=F}}
\int_{E(Q)}g(x)^{q'}u(x)\,dx
\r\}^{1/q'}
\\ &=:
{\rm(i)}\times{\rm(ii)}.
\end{align*}
Since the sets $E(Q)$ are pairwise disjoint, 
$$
{\rm(ii)}
=
\l(\int_{Q_0}g(x)^{q'}u(x)\,dx\r)^{1/q'}
\le 1.
$$
Since $p\le q$ and 
$\|\cdot\|_{l^p}\ge\|\cdot\|_{l^q}$, 
\begin{equation}\label{3.6}
{\rm(i)}
\le
\l\{
\sum_{F\in\cF}
\l(
\fint_{F}f(y)\sg(y)^{-1}\,d\sg(y)
\l(\int_{F}M[\sg1_{F}](x)^qu(x)\,dx\r)^{1/q}
\r)^p
\r\}^{1/p}.
\end{equation}
Further, 
\begin{align*}
&\le
\l\{
\sup_{F\in\cF}
\frac1{\sg(F)^{1/p}}
\l(\int_{F}M[\sg1_{F}](x)^qu(x)\,dx\r)^{1/q}
\r\}
\\ &\quad\times
\l\{
\sum_{F\in\cF}
\l(\fint_{F}f(y)\sg(y)^{-1}\,d\sg(y)\r)^p
\sg(F)
\r\}^{1/p}
\\ &\le
c_3l(Q_0)^{\lm/q}
\l\{
\sum_{F\in\cF}
\l(\fint_{F}f(y)\sg(y)^{-1}\,d\sg(y)\r)^p
\sg(F)
\r\}^{1/p},
\end{align*}
where we have used \eqref{3.4}.

By using 
the definition of $M_{\sg}$, 
\eqref{3.5} and the facts that 
$E_{\cF}(F)\subset F$ and 
the sets $E_{\cF}(F)$ are pairwise disjoint, 
\begin{align*}
\lefteqn{
\l\{
\sum_{F\in\cF}
\l(\fint_{F}f(y)\sg(y)^{-1}\,d\sg(y)\r)^p
\sg(F)
\r\}^{1/p}
}\\ &\le C
\l\{
\sum_{F\in\cF}
\l(\fint_{F}f(y)\sg(y)^{-1}\,d\sg(y)\r)^p
\sg(E_{\cF}(F))
\r\}^{1/p}
\\ &\le C
\l(\int_{\R^n}M_{\sg}[f\sg^{-1}](x)^p\,d\sg(x)\r)^{1/p}.
\end{align*}
By use of Lemma \ref{lem3.2}, 
\begin{align*}
&\le C
\l(\int_{\R^n}[f(x)\sg(x)^{-1}]^p\sg(x)\,dx\r)^{1/p}
\\ &=C
\l(\int_{\R^n}f(x)^pb(x)v(x)\,dx\r)^{1/p}
\\ &\le C
\|f\|_{L^{p,\lm}(v)},
\end{align*}
where we have used \eqref{2.3}.

So altogether we obtain 
$$
\l(\int_{Q_0}M[f1_{Q_0}](x)^qu(x)\,dx\r)^{1/q}
\le C
c_3l(Q_0)^{\lm/q}
\|f\|_{L^{p,\lm}(v)}
$$
and
$$
{\rm\eqref{3.2}}
\le Cc_3
\|f\|_{L^{p,\lm}(v)}.
$$
These complete the proof of Theorem \ref{thm3.1} (II).

\subsection{Proof of Theorem \ref{thm3.1} (III)}\label{ssec3.3}
Assume that $1<p\le q<\8$ and 
the statements (b) and (d). 
Going through the same argument as before, 
retaining the same notation, 
we need only evaluate \eqref{3.2} 
especially \eqref{3.3}.
Letting $\sg\equiv 1$ in \eqref{3.6}, 
we see that 
$$
{\rm(i)}
\le
\l\{
\sum_{F\in\cF}
\l(\fint_{F}f(y)\,dyu(F)^{1/q}\r)^p
\r\}^{1/p}.
$$
Fix $b\in\cB_{\lm}$ so that 
\begin{equation}\label{3.7}
\frac1{l(Q_0)^{\lm/q}}
\sup_{\substack{Q\in\cQ \\ Q\subset Q_0}}
\frac{u(Q)^{1/q}}{|Q|^{1/p}}
\l(\fint_{Q}[b(x)v(x)]^{-ap'/p}\,dx\r)^{1/ap'}
\le 2c_4.
\end{equation}
Take $c<1$ is a number that satisfy $(cp)'=ap'$. 
H\"{o}lder's inequality gives 
\begin{align*}
\fint_{F}f(y)\,dy
&=
\fint_{F}f(y)[b(y)v(y)]^{1/p}[b(y)v(y)]^{-1/p}\,dy
\\ &\le
\l(\fint_{F}f(y)^{cp}[b(y)v(y)]^c\,dy\r)^{1/cp}
\l(\fint_{F}[b(y)v(y)]^{-ap'/p}\,dy\r)^{1/ap'},
\end{align*}
which implies 
\begin{align*}
{\rm(i)}
&\le
\l\{
\sup_{F\in\cF}
\frac{u(F)^{1/q}}{|F|^{1/p}}
\l(\fint_{F}[b(y)v(y)]^{-ap'/p}\,dy\r)^{1/ap'}
\r\}
\\ &\quad\times
\l\{
\sum_{F\in\cF}
\l(\fint_{F}f(y)^{cp}[b(y)v(y)]^c\,dy\r)^{1/c}
|F|
\r\}^{1/p}
\\ &\le 
c_4l(Q_0)^{\lm/q}
\l\{
\sum_{F\in\cF}
\l(\fint_{F}f(y)^{cp}[b(y)v(y)]^c\,dy\r)^{1/c}
|F|
\r\}^{1/p},
\end{align*}
where we have used \eqref{3.7}. 

The definition of $M$, 
the facts that 
$|F|\le 2|E_{\cF}(F)$,
$E_{\cF}(F)\subset F$ and 
the sets $E_{\cF}(F)$ are pairwise disjoint 
read 
\begin{align*}
\lefteqn{
\l\{
\sum_{F\in\cF}
\l(\fint_{F}f(y)^{cp}[b(y)v(y)]^c\,dy\r)^{1/c}
|F|
\r\}^{1/p}
}\\ &\le C
\l\{
\sum_{F\in\cF}
\l(\fint_{F}f(y)^{cp}[b(y)v(y)]^c\,dy\r)^{1/c}
|E_{\cF}(F)|
\r\}^{1/p}
\\ &\le C
\l(\int_{\R^n}M[f^{cp}(bv)^c](x)^{1/c}\,dx\r)^{1/p}
\\ &\le C
\l(\int_{\R^n}f(x)^pb(x)v(x)\,dx\r)^{1/p}
\\ &\le C
\|f\|_{L^{p,\lm}(v)},
\end{align*}
where we have used 
the $L^{1/c}$-boundedness of $M$ and 
\eqref{2.3}. 

So altogether we obtain 
$$
\l(\int_{Q_0}M[f1_{Q_0}](x)^qu(x)\,dx\r)^{1/q}
\le C
c_4l(Q_0)^{\lm/q}
\|f\|_{L^{p,\lm}(v)}
$$
and
$$
{\rm\eqref{3.2}}
\le Cc_4
\|f\|_{L^{p,\lm}(v)}.
$$
This completes the proof of Theorem \ref{thm3.1} (III).

\subsection{Proof of Theorem \ref{thm3.1} (IV)}\label{ssec3.4}
Assume that 
$0<q<p<\8$, $1<p<\8$ and 
the statements (b) and (e). 
In the same manner as above, 
retaining the same notation as before, 
we need only evaluate \eqref{3.2}. 
Fix $b\in\cB_{\lm}$ so that 
\begin{equation}\label{3.8}
\frac1{l(Q_0)^{\lm/q}}
\l(\int_{Q_0}
M[(bv)^{-p'/p}](x)^r
u(x)^{r/q}[b(x)v(x)]^{rp'/p^2}
\,dx\r)^{1/r}
\le 2c_5,
\quad\frac1q=\frac1r+\frac1p,
\end{equation}
and, let $\sg=(bv)^{-p'/p}$. 

We have for every $Q\in\cD(Q_0)$, 
\begin{align*}
\fint_{Q}f(y)\,dy
&=
\frac{\sg(Q)}{|Q|}
\fint_{Q}f(y)\sg(y)^{-1}\,d\sg(y)
\\ &\le
M[\sg](x)M_{\sg}[f\sg^{-1}](x),
\quad x\in Q,
\end{align*}
which implies
$$
Mf(x)
\le
M[\sg](x)M_{\sg}[f\sg^{-1}](x),
\quad x\in Q_0.
$$
Thus, 
\begin{align*}
\lefteqn{
\l(\int_{Q_0}M[f1_{Q_0}](x)^qu(x)\,dx\r)^{1/q}
\le
\l(\int_{Q_0}M[\sg](x)^qM_{\sg}[f\sg^{-1}](x)^qu(x)\,dx\r)^{1/q}
}\\ &=
\l(
\int_{Q_0}
M[\sg](x)^qu(x)\sg(x)^{-1}
\cdot
M_{\sg}[f\sg^{-1}](x)^q
\,d\sg(x)
\r)^{1/q}.
\end{align*}
{}From H"{o}lder's inequality with 
the exponent $(p-q)/p+q/p=1$ and 
the fact that $1/r=(p-q)/pq$, 
\begin{align*}
&\le
\l(
\int_{Q_0}
\l(M[\sg](x)^qu(x)\sg(x)^{-1}\r)^{p/(p-q)}
\,d\sg(x)\r)^{1/r}
\l(
\int_{Q_0}
M_{\sg}[f\sg^{-1}](x)^p\,d\sg(x)
\r)^{1/p}
\\ &=:
{\rm(iii)}\times{\rm(iv)}.
\end{align*}
We have by \eqref{3.8} 
$$
{\rm(iii)}
=
\l(
\int_{Q_0}
M[\sg](x)^ru(x)^{r/q}\sg(x)^{-r/p}
\,dx
\r)^{1/r}
\le
2c_5l(Q_0)^{\lm/q},
$$
and, we have by Lemma \ref{lem3.2} 
$$
{\rm(iv)}
\le C
\l(\int_{\R^n}f(x)^pb(x)v(x)\,dx\r)^{1/p}
\le C
\|f\|_{L^{p,\lm}(v)}.
$$
These imply 
$$
\l(\int_{Q_0}M[f1_{Q_0}](x)^qu(x)\,dx\r)^{1/q}
\le C
c_5l(Q_0)^{\lm/q}
\|f\|_{L^{p,\lm}(v)}
$$
and
$$
{\rm\eqref{3.2}}
\le Cc_5
\|f\|_{L^{p,\lm}(v)}.
$$
This completes the proof of Theorem \ref{thm3.1} (IV).

\section{One-weight norm inequalities}\label{sec4}
We restate Theorem \ref{thm3.1} in terms of the one-weight setting. 

\begin{proposition}\label{prp4.1}
Let $1<p<\8$, $0<\lm<n$ 
and $w$ be a weight. 
Consider the following four statements: 

\begin{enumerate}
\item[{\rm(a)}] 
There exists a constant $c_1>0$ 
such that 
$$
\|Mf\|_{L^{p,\lm}(w)}
\le c_1
\|f\|_{L^{p,\lm}(w)}
$$
holds for every function 
$f\in L^{p,\lm}(\R^n,w)$;
\item[{\rm(b)}] 
There exists a constant $c_2>0$ 
such that 
$$
\frac1{|Q|}
\|w^{1/p}1_{Q}\|_{L^{p,\lm}}
\|w^{-1/p}1_{Q}\|_{H^{p',\lm}}
\le c_2
$$
holds for every cube $Q\in\cQ$;
\item[{\rm(c)}] 
There exists a constant $c_3>0$ 
such that 
$$
\inf_{b\in\cB_{\lm}}
\l(
\frac1{l(Q_0)^{\lm}}
\sup_{\substack{Q\in\cQ \\ Q\subset Q_0}}
\frac1{[(bw)^{-p'/p}](Q)}
\int_{Q}M[(bw)^{-p'/p}1_{Q}](x)^pw(x)\,dx
\r)
\le c_3^p
$$
holds for every cube $Q_0\in\cQ$;
\item[{\rm(d)}] 
There exists a constant $c_4>0$ 
such that, for some $a>1$, 
$$
\inf_{b\in\cB_{\lm}}
\l(
\frac1{l(Q_0)^{\lm}}
\sup_{\substack{Q\in\cQ \\ Q\subset Q_0}}
\frac{w(Q)}{|Q|}
\l(\fint_{Q}[b(x)w(x)]^{-ap'/p}\,dx\r)^{p/ap'}
\r)
\le c_4^p
$$
holds for every cube $Q_0\in\cQ$.
\end{enumerate}

\noindent
Then,

\begin{enumerate}
\item[{\rm(I)}] 
{\rm(a)} implies {\rm(b)} 
with $c_2\le Cc_1$;
\item[{\rm(II)}] 
{\rm(b)} and {\rm(c)} imply {\rm(a)} 
with $c_1\le C(c_2+c_3)$;
\item[{\rm(III)}] 
{\rm(b)} and {\rm(d)} imply {\rm(a)} 
with $c_1\le C(c_2+c_4)$.
\end{enumerate}
\end{proposition}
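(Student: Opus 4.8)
The plan is to obtain Proposition~\ref{prp4.1} directly from Theorem~\ref{thm3.1} by specializing to the one-weight, equal-exponent case $u=v=w$ and $q=p$. Under this substitution the standing hypothesis $1<p\le q<\8$ of parts (II) and (III) of Theorem~\ref{thm3.1} becomes $1<p\le p<\8$, which holds automatically, so parts (I), (II), (III) of the theorem are all in force. It then remains only to verify that, after setting $u=v=w$ and $q=p$, the four conditions (a)--(d) of Theorem~\ref{thm3.1} are identical to the correspondingly labelled conditions of Proposition~\ref{prp4.1}; once this dictionary is established, implications (I)--(III) transfer with the same constants.

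First I would observe that conditions (a) and (b) coincide at once: the inequality in (a) becomes $\|Mf\|_{L^{p,\lm}(w)}\le c_1\|f\|_{L^{p,\lm}(w)}$, and the testing quantity in (b) becomes $\frac{1}{|Q|}\|w^{1/p}1_{Q}\|_{L^{p,\lm}}\|w^{-1/p}1_{Q}\|_{H^{p',\lm}}$, exactly as in Proposition~\ref{prp4.1}.

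The only mildly nontrivial point is matching conditions (c) and (d), whose specialized forms still carry the $q$-th (respectively $ap'$-th) roots and the normalization $l(Q_0)^{\lm/q}=l(Q_0)^{\lm/p}$. Here I would raise both sides of each specialized inequality to the power $p$. Since $t\mapsto t^p$ is a continuous strictly increasing bijection of $[0,\8)$, it commutes exactly with the infimum over $b\in\cB_{\lm}$ and with the supremum over $Q\subset Q_0$. Applying it converts $l(Q_0)^{\lm/p}$ into $l(Q_0)^{\lm}$; in (c) it removes the outer exponent $1/p$ from $\int_{Q}M[(bw)^{-p'/p}1_{Q}]^{p}w\,dx$ and replaces $[(bw)^{-p'/p}](Q)^{1/p}$ by $[(bw)^{-p'/p}](Q)$, while in (d) it turns the exponent $1/ap'$ into $p/ap'$ and the factor $w(Q)^{1/p}/|Q|^{1/p}$ into $w(Q)/|Q|$. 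The right-hand bounds $c_3$ and $c_4$ become $c_3^p$ and $c_4^p$, which is precisely the normalization chosen in Proposition~\ref{prp4.1}, so the named constants are unchanged.

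With this dictionary in place, implication (I) of Proposition~\ref{prp4.1} is implication (I) of Theorem~\ref{thm3.1}, and (II), (III) follow from (II), (III) of the theorem with identical constants. The one point deserving genuine care — and hence the main (if minor) obstacle — is the commutation of the rescaling $t\mapsto t^p$ with the nested $\inf_b\sup_Q$ structure: one must invoke that $t^p$ is a monotone bijection of $[0,\8)$, so that it preserves infima and suprema as exact equalities rather than as one-sided estimates. Were only a one-directional bound available, the equivalence of the two forms of conditions (c), (d) would degrade and the clean transfer of constants would be lost.
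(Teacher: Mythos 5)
Your proposal is correct and is exactly the paper's own route: the paper offers no separate proof of Proposition~\ref{prp4.1}, presenting it simply as Theorem~\ref{thm3.1} restated in the one-weight setting $u=v=w$, $q=p$, and your dictionary — including raising conditions (c), (d) to the $p$-th power, which commutes with the $\inf_b\sup_Q$ structure since $t\mapsto t^p$ is increasing on $[0,\8)$ — is precisely the intended verification.
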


We have from this proposition the following.

\begin{proposition}\label{prp4.2}
Let $1<p<\8$, $0<\lm<n$ and 
$w=|\cdot|^{\al}$ be a power weight. 
Then, the weighted inequality 
$$
\|Mf\|_{L^{p,\lm}(w)}
\le C
\|f\|_{L^{p,\lm}(w)}
$$
holds if and only if 
$\lm-n\le\al<\lm+(p-1)n$.
\end{proposition}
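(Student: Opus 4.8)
The plan is to deduce both implications from Proposition \ref{prp4.1}, after reducing by homogeneity to two cube geometries: cubes $Q$ whose distance to the origin is comparable to or exceeds $l(Q)$ (on which $w=|\cdot|^{\al}$ is essentially constant), and cubes close to the origin, which after the dilation $x\mapsto\ell x$ may be normalized to unit side. The scaling $w(\ell\,\cdot)=\ell^{\al}w$ makes all the relevant functionals behave like explicit powers of $\ell$, so each geometry is governed by a single normalized computation.

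For necessity I would invoke part {\rm(I)}: the inequality (a) forces (b). Using the dilation invariance one checks that for cubes centered at the origin the left-hand side of (b) is scale invariant, so its boundedness over this subfamily is equivalent to the finiteness of the single product $\|w^{1/p}1_{Q_0}\|_{L^{p,\lm}}\,\|w^{-1/p}1_{Q_0}\|_{H^{p',\lm}}$ for the unit cube $Q_0$. I then treat the two factors separately. Testing the Morrey norm against small cubes at the origin shows $\|w^{1/p}1_{Q_0}\|_{L^{p,\lm}}<\8$ exactly when $\al\ge\lm-n$. For the second factor I use the dual description of $H^{p',\lm}$ in Lemma \ref{lem2.4}, pairing $w^{-1/p}1_{Q_0}$ against the functions $|x|^{-\bt}1_{Q_0}$, which lie in $L^{p,\lm}$ precisely when $\bt\le(n-\lm)/p$ and whose pairing diverges once $\bt\ge n-\al/p$; these two ranges of $\bt$ overlap exactly when $\al\ge\lm+(p-1)n$, so $\|w^{-1/p}1_{Q_0}\|_{H^{p',\lm}}<\8$ iff $\al<\lm+(p-1)n$. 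Thus (b) fails outside the asserted range.

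For sufficiency I would verify hypotheses (b) and (d) of part {\rm(III)}. Condition (b) follows from the same computation: on far cubes it equals a dimensional constant, and on near cubes it is the finite scale-invariant product above, finite precisely for $\lm-n\le\al<\lm+(p-1)n$. The crux is (d), where for each $Q_0$ I must exhibit $b\in\cB_{\lm}$ absorbing the singularity at the origin. For $Q_0$ far from the origin the constant weight $b\approx l(Q_0)^{-\lm}M[1_{Q_0}]^{\lm_0/n}$ makes $bw$ essentially constant on $Q_0$ and reduces (d) to a bound for a constant, which is immediate. For $Q_0$ near the origin I take $b(x)\approx l(Q_0)^{\eta-\lm}|x|^{-\eta}$ on $Q_0$ with rapid decay outside, where $0<\eta<\lm$: the requirement $\eta<\lm$ is exactly what keeps $\int b\,dH^{\lm}$ finite, while $0\le\eta<n$ keeps $b\in A_1$. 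A direct evaluation then gives, for subcubes $Q\subset Q_0$ through the origin, an inner quantity comparable to $l(Q_0)^{\lm-\eta}l(Q)^{\eta}$, whose supremum is $l(Q_0)^{\lm}$, so that (d) holds with constant $\approx1$ as soon as the averages $\fint_{Q}(bw)^{-ap'/p}$ converge, i.e. $(\al-\eta)\,ap'/p<n$. Letting $\eta\uparrow\lm$ and $a\downarrow1$, this is achievable precisely when $\al<\lm+(p-1)n$.

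The main obstacle is the construction and verification of this near-extremal $b\in\cB_{\lm}$: one must simultaneously meet the content normalization $\int b\,dH^{\lm}\le1$ and the $A_1$ condition while keeping $b$ as singular as possible at the origin. It is the content bound that caps the admissible exponent at $\eta<\lm$ and thereby pushes the upper endpoint from the naive Muckenhoupt value $(p-1)n$ up to $\lm+(p-1)n$; matching this against the divergence computation in the necessity part is what pins the range down exactly.
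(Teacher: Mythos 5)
Your proposal is correct and follows essentially the same route as the paper: necessity via Theorem \ref{thm3.1}\,(I) by showing one factor of condition (b) blows up at the unit cube when $\al<\lm-n$ or $\al\ge\lm+(p-1)n$, and sufficiency via part (III) by verifying (b) and (d) with the same two-regime choice of $b\in\cB_{\lm}$ --- the weight $C\,l(Q_0)^{-\lm}M[1_{Q_0}]^{\lm_0/n}$ for cubes far from the origin and a truncated power $\approx l(Q_0)^{\eta-\lm}|x|^{-\eta}$ with $\eta<\lm$ (the paper's $b_2$ with $\eta=\lm_1$) for cubes near it, with the Hausdorff-content normalization capping $\eta$ below $\lm$ exactly as you observe. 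Your only deviations are cosmetic refinements: making the $H^{p',\lm}$ divergence explicit via the duality of Lemma \ref{lem2.4} with test functions $|x|^{-\bt}1_{Q_0}$, and replacing the paper's sharp cutoff $1_{B_0}$ by rapid decay so that $b$ genuinely lies in $A_1$.
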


\begin{proof}
Assume that $\lm-n\le\al<\lm+(p-1)n$.
We first evaluate
\begin{equation}\label{4.1}
\frac1{|Q_0|}
\|w^{1/p}1_{Q_0}\|_{L^{p,\lm}}
\|w^{-1/p}1_{Q_0}\|_{H^{p',\lm}}
\end{equation}
for 
$$
Q_0=\l(c,c+\frac{d}{\sqrt{n}}\r)\times\l(0,\frac{d}{\sqrt{n}}\r)^{n-1}\subset\R^n,
\quad c,d>0.
$$

Suppose that $d\le c$. 
Let $0<\lm<\lm_0<n$ and set 
$$
b_1(x)
=
C\frac{M[1_{Q_0}](x)^{\lm_0/n}}{l(Q_0)^{\lm}}.
$$
Then, 
we see that $b_1$ belongs to $\cB_{\lm}$ 
(see Section \ref{sec2}). This implies 
\begin{align*}
{\rm\eqref{4.1}}
&\le
\frac1{|Q_0|}
\l(\int_{Q_0}l(Q_0)^{-\lm}|x|^{\al}\,dx\r)^{1/p}
\l(\int_{Q_0}[|x|^{\al}b_1(x)]^{-p'/p}\,dx\r)^{1/p'}
\\ &=
\frac1{|Q_0|}
\l(\int_{Q_0}l(Q_0)^{-\lm}|x|^{\al}\,dx\r)^{1/p}
\l(\int_{Q_0}[|x|^{\al}l(Q_0)^{-\lm}]^{-p'/p}\,dx\r)^{1/p'}
\\ &\le
\l(\frac{\sup_{x\in Q_0}|x|^{\al}}{\inf_{x\in Q_0}|x|^{\al}}\r)^{1/p}
\le 
\l(\frac{c+d}{c}\r)^{|\al|/p}
\le C.
\end{align*}

Suppose that $d>c$. Let 
$B_0=\{x\in\R^n:\,|x|<2d\}$, 
$\al<\lm_1+(p-1)n<\lm+(p-1)n$. 
and set 
$$
b_2(x)
=
\frac{\lm/\lm_1-1}{\lm/\lm_1}
(2d)^{\lm_1-\lm}
|x|^{-\lm_1}1_{B_0}(x).
$$
Then, we see that 
$|x|^{-\lm_1}\in A_1$ 
and 
$\ds\int_{\R^n}b_2\,dH^{\lm}=1$.
Indeed, 
\begin{align*}
\lefteqn{
\int_{\R^n}|\cdot|^{-\lm_1}1_{B_0}\,dH^{\lm}
}\\ &=
(2d)^{\lm-\lm_1}
+
\int_{(2d)^{-\lm_1}}^{\8}
t^{-\lm/\lm_1}\,dt
\\ &=
(2d)^{\lm-\lm_1}
+
\frac1{\lm/\lm_1-1}
(2d)^{\lm-\lm_1}
=
\frac{\lm/\lm_1}{\lm/\lm_1-1}
(2d)^{\lm-\lm_1}.
\end{align*}
Thus, we obtain 
\begin{align*}
{\rm\eqref{4.1}}
&\le
\frac1{|Q_0|}
\l(\int_{Q_0}l(Q_0)^{-\lm}|x|^{\al}\,dx\r)^{1/p}
\l(\int_{Q_0}[|x|^{\al}b_2(x)]^{-p'/p}\,dx\r)^{1/p'}
\\ &\le C
\frac1{|B_0|}
\l(\int_{B_0}(2d)^{-\lm}|x|^{\al}\,dx\r)^{1/p}
\l(\int_{B_0}[|x|^{\al}b_2(x)]^{-p'/p}\,dx\r)^{1/p'}
\\ &=C
\frac1{|B_0|}
(2d)^{-\lm_1/p}
\l(\int_{B_0}|x|^{\al}\,dx\r)^{1/p}
\l(\int_{B_0}|x|^{\frac{\lm_1-\al}{p-1}}\,dx\r)^{1/p'}
\\ &\le C,
\end{align*}
where we have used 
$\al<\lm_1+(p-1)n$. 

Next, we evaluate 
\begin{equation}\label{4.2}
\inf_{b\in\cB_{\lm}}
\l(
\frac1{l(Q_0)^{\lm}}
\sup_{\substack{Q\in\cQ \\ Q\subset Q_0}}
\frac{w(Q)}{|Q|}
\l(\fint_{Q}[b(x)w(x)]^{-ap'/p}\,dx\r)^{p/ap'}
\r).
\end{equation}
When $d\le c$, 
the same estimates of \eqref{4.1} are available to those of \eqref{4.2}.
When $d>c$ and 
$\ds
\frac{d_0}{c_0}
:=
\frac{\sup_{x\in Q}|x|}{\inf_{x\in Q}|x|}
\le 2$,
\begin{align*}
\lefteqn{
\frac1{l(Q_0)^{\lm}}
\frac{w(Q)}{|Q|}
\l(\fint_{Q}[b_2(x)w(x)]^{-ap'/p}\,dx\r)^{p/ap'}
}\\ &\le
\frac{(d_0)^{\lm_1}}{(2d)^{\lm_1}}
\l(\frac{d_0}{c_0}\r)^{|\al|}
\le 2.
\end{align*}
When $d>c$ and 
$\ds\frac{\sup_{x\in Q}|x|}{\inf_{x\in Q}|x|}>2$,
the same estimates of \eqref{4.1} are available too.

If $\al<\lm-n$, then 
$$
\|w^{1/p}1_{(0,1)^n}\|_{L^{p,\lm}}=\8,
$$
and, if $\al\ge\lm+(p-1)n$, then 
$$
\|w^{-1/p}1_{(0,1)^n}\|_{H^{p',\lm}}=8.
$$
These yield the proposition.
\end{proof}

\section{Appendix}\label{sec5}
As an appendix, we shall show 
the following two-weight norm inequality in the upper triangle case 
$0<q<p<\8$, $1<p<\8$. 

\begin{proposition}\label{prp5.1}
Let $0<q<p<\8$, $1<p<\8$ 
and $u,v$ be weights. 
Suppose that $v\in A_1$. Then, 
the weighted inequality 
\begin{equation}\label{5.1}
\|Mf\|_{L^q(u)}
\le C
\|f\|_{L^p(v^{1-p})}
\end{equation}
holds if and only if
\begin{equation}\label{5.2}
\|u^{1/q}v^{1/p'}\|_{L^r}<\8,
\quad\frac1q=\frac1r+\frac1p.
\end{equation}
\end{proposition}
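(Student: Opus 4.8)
The plan is to derive both implications from Hölder's inequality in the exponents $q<p$, with the hypothesis $v\in A_1$ entering only through Muckenhoupt's one-weight theorem for $M$. Set $W=v^{1-p}$, and note that the relation $1/q=1/r+1/p$ says precisely that $p/q$ and $r/q$ are conjugate, i.e. $q/p+q/r=1$. I will also use that $W^{-p'/p}=v$ and $W^{-r/p}=v^{r/p'}$, both consequences of $(p-1)/p=1/p'$.

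\textbf{Sufficiency.} First I would record that $v\in A_1$ forces $W\in A_p$: since $W^{-p'/p}=v$, the $A_p$ characterization gives $[W]_{A_p}=\sup_Q(\fint_Q v^{1-p})(\fint_Q v)^{p/p'}$, and combining the $A_1$ bound $\fint_Q v\le[v]_{A_1}\essinf_Q v$ with $\fint_Q v^{1-p}\le(\essinf_Q v)^{1-p}$ yields $[W]_{A_p}\le[v]_{A_1}^{p-1}$. Hence $M$ is bounded on $L^p(W)$. I would then split
$$
\int_{\R^n}(Mf)^qu\,dx
=
\int_{\R^n}\l((Mf)^qW^{q/p}\r)\l(uW^{-q/p}\r)\,dx
$$
and apply Hölder with the conjugate pair $p/q,\,r/q$. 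The first factor is $\l(\int(Mf)^pW\,dx\r)^{q/p}\le C\|f\|_{L^p(W)}^q$ by the one-weight bound, and the second is $\l(\int u^{r/q}W^{-r/p}\,dx\r)^{q/r}=\|u^{1/q}v^{1/p'}\|_{L^r}^q$. Taking $q$-th roots gives \eqref{5.1}.

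\textbf{Necessity.} Here the maximal operator is essentially inert: since $Mf\ge|f|$ pointwise, \eqref{5.1} already yields the embedding $\|f\|_{L^q(u)}\le C\|f\|_{L^p(W)}$ for every $f$, and I claim this alone forces \eqref{5.2}. The extremal choice is $F=(uv^{p-1})^{1/(p-q)}$, which satisfies the pointwise identity $F^{p-q}=u/W$, hence $F^pW=F^qu=u^{r/q}v^{r/p'}$. To avoid integrability issues I would test the embedding on the truncations $F_k=\min(F,k)1_{Q_k}$ over an increasing exhaustion by cubes $Q_k$; these are bounded with compact support, so both sides are finite (as $W,u\in L^1_{\rm loc}$). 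Because $F_k\le F$ one has $F_k^{p-q}\le u/W$, i.e. $F_k^pW\le F_k^qu$ pointwise, so the embedding gives $\l(\int F_k^qu\r)^{1/q}\le C\l(\int F_k^pW\r)^{1/p}\le C\l(\int F_k^qu\r)^{1/p}$, whence $\int F_k^qu\le C^r$. Letting $k\to\8$ and invoking monotone convergence gives $\int u^{r/q}v^{r/p'}\,dx=\int F^qu\le C^r<\8$, which is \eqref{5.2}.

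The computations are pure exponent bookkeeping and should go through cleanly once the conjugacy $q/p+q/r=1$ and the identities $W^{-p'/p}=v$, $F^{p-q}=u/W$ are in hand. The one point that deserves care is the necessity: rather than Hölder itself one needs its sharp converse, which is exactly what the explicit extremizer $F=(uv^{p-1})^{1/(p-q)}$ supplies; the truncation-plus-monotone-convergence step is what makes the argument rigorous without a priori knowing that $u^{1/q}v^{1/p'}\in L^r$. It is worth stressing that $v\in A_1$ is used only in the sufficiency (to place $v^{1-p}$ in $A_p$), while the necessity requires nothing on $v$ beyond its being a weight.
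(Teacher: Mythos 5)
Your proof is correct, and both halves take a genuinely different route from the paper's. For sufficiency, the paper never invokes Muckenhoupt's theorem: it exploits $v\in A_1$ pointwise, writing $\fint_{Q}f\,dy=\frac{v(Q)}{|Q|}\fint_{Q}fv^{-1}\,dv\le Mv(x)\,M_v[fv^{-1}](x)\le C\,v(x)M_v[fv^{-1}](x)$ for $x\in Q$, then applies H\"{o}lder with exponents $p/(p-q)$ and $p/q$ relative to the measure $dv$ and closes with the universal bound $\|M_\mu f\|_{L^p(\mu)}\le p'\|f\|_{L^p(\mu)}$ (Lemma \ref{lem3.2}), valid for an arbitrary measure. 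You instead upgrade $v\in A_1$ to $v^{1-p}\in A_p$ (your computation $[v^{1-p}]_{A_p}\le[v]_{A_1}^{p-1}$ is correct, using $W^{-p'/p}=v$) and quote the one-weight $A_p$ theorem; this is heavier machinery for the same H\"{o}lder bookkeeping, but it is perfectly sound and yields the same condition \eqref{5.2}. For necessity, both arguments first reduce \eqref{5.1} to the embedding $L^p(v^{1-p})\hookrightarrow L^q(u)$ via $Mf\ge|f|$ a.e.; the paper then proves the converse-H\"{o}lder statement by duality, testing \eqref{5.1} on $f=g^{1/q}v^{1/p'}$ with $\|g\|_{L^{p/q}}\le 1$ (this choice makes $v^{p/p'}v^{1-p}=1$ cancel, and $L^{p/q}$--$L^{r/q}$ duality returns $\|u^{1/q}v^{1/p'}\|_{L^r}^q$), whereas you exhibit the explicit extremizer $F=(uv^{p-1})^{1/(p-q)}$ and make it rigorous by truncation and monotone convergence. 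Your truncation step is in fact tighter than your own parenthetical suggests: local integrability of $W=v^{1-p}$ is not needed at all, since the pointwise bound $F_k^pW\le F_k^qu$ itself gives $\int F_k^pW\le\int F_k^qu\le k^q u(Q_k)<\8$, after which the self-improving inequality $A^{1/q}\le CA^{1/p}$ with $A=\int F_k^qu$ yields $A\le C^r$ (and under $v\in A_1$, $W$ is locally essentially bounded anyway, since $v\ge[v]_{A_1}^{-1}\fint_Q v$ a.e.\ on each $Q$). What each approach buys: the paper's sufficiency argument is self-contained and dyadic, avoiding the $A_p$ theorem, and its duality-based necessity is shorter; your necessity avoids duality entirely and makes transparent that \eqref{5.2} is exactly the sharp converse of H\"{o}lder, realized by an explicit extremizer. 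Your closing observation that $v\in A_1$ is used only in the sufficiency direction is accurate and matches the structure of the paper's proof as well.
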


\begin{proof}
In the same manner as in the proof of Theorem \ref{thm3.1}, 
we may assume that $f$ is non-negative and 
$M$ is the dyadic maximal operator.

Suppose that \eqref{5.2} holds.
We have for every $Q\in\cD$, 
\begin{align*}
\fint_{Q}f(y)\,dy
&=
\frac{v(Q)}{|Q|}
\fint_{Q}f(y)v(y)^{-1}\,dv(y)
\\ &\le
Mv(x)M_v[fv^{-1}](x)
\le C
v(x)M_v[fv^{-1}](x),
\quad x\in Q,
\end{align*}
where we have used $v\in A_1$. 
This implies
$$
Mf(x)\le Cv(x)M_v[fv^{-1}](x),
\quad x\in\R^n.
$$
Thus, 
\begin{align*}
\lefteqn{
\l(\int_{\R^n}Mf(x)^qu(x)\,dx\r)^{1/q}
\le C
\l(\int_{\R^n}v(x)^qM_v[fv^{-1}](x)^qu(x)\,dx\r)^{1/q}
}\\ &=C
\l(
\int_{\R^n}
v(x)^{q-1}u(x)
\cdot
M_v[fv^{-1}](x)^q
\,dv(x)
\r)^{1/q}.
\end{align*}
{}From H"{o}lder's inequality with 
the exponent $(p-q)/p+q/p=1$ and 
the fact that $1/r=(p-q)/pq$, 
\begin{align*}
&\le C
\l(
\int_{\R^n}
\l(v(x)^{q-1}u(x)\r)^{p/(p-q)}
\,dv(x)\r)^{1/r}
\l(
\int_{\R^n}
M_v[fv^{-1}](x)^p\,dv(x)
\r)^{1/p}
\\ &\le C
\l(
\int_{\R^n}
\l(u(x)^{1/q}v(x)^{1-1/q+1/r}\r)^r
\,dx\r)^{1/r}
\\ &\quad\times
\l(\int_{\R^n}f(x)^pv(x)^{1-p}\,dx\r)^{1/p}
\le C
\|u^{1/q}v^{1/p'}\|_{L^r}
\|f\|_{L^p(v^{1-p})},
\end{align*}
where we have used Lemma \ref{lem3.2}.

Suppose that \eqref{5.1} holds.
Notice that $q/p+q/r=1$.
Keeping this in mind, we evaluate 
\begin{equation}\label{5.3}
\int_{\R^n}g(x)v(x)^{q/p'}u(x)\,dx
\end{equation}
with a non-negative function $g$ 
which satisfies $\|g\|_{L^{p/q}}\le 1$.

It follows from \eqref{5.1} that 
\begin{align*}
{\rm\eqref{5.3}}
&=
\int_{\R^n}[g(x)^{1/q}v(x)^{1/p'}]^qu(x)\,dx
\\ &\le
\int_{\R^n}M[g^{1/q}v^{1/p'}](x)^qu(x)\,dx
\\ &le C
\l(
\int_{\R^n}
g(x)^{p/q}v(x)^{p/p'}v(x)^{1-p}
\,dx\r)^{q/p}
\\ &=C
\l(\int_{\R^n}g(x)^{p/q}\,dx\r)^{q/p}
\le C.
\end{align*}
This yields \eqref{5.2}.
\end{proof}

\end{document}